\documentclass{amsart}

\usepackage{mathrsfs, mathtools, amssymb}

\usepackage{dsfont}

\usepackage[paper=a4paper, margin=3cm]{geometry}

\usepackage[breaklinks]{hyperref}       
\usepackage{url}            
\usepackage{booktabs}       
\usepackage{amsfonts}       
\usepackage{nicefrac}       
\usepackage{microtype}      
\usepackage{lipsum}

\usepackage{amsmath,amssymb,amsthm,amscd}
\usepackage{amsrefs}
\usepackage{tikz,tikz-cd}
\usetikzlibrary{arrows,arrows.meta,calc}

\usepackage{tikz-3dplot}

\usepackage{graphicx}
\graphicspath{}
\DeclareGraphicsExtensions{.pdf,.png,.jpg,.jpeg}

\usepackage{yhmath}
\usepackage{enumitem}
\usepackage{breakurl}
\usepackage{subcaption}

\usepackage{tikz}
\usepackage{tikz-cd}
\usetikzlibrary{arrows}
\usepackage{epsfig}
\usepackage[all]{xy}
\usepackage{epstopdf}
\usepackage{framed}
\usepackage[dvipsnames]{xcolor}

\newcommand{\Z}{\mathbb Z}
\newcommand{\Q}{\mathbb Q}
\newcommand{\R}{\mathbb R}
\newcommand{\C}{\mathbb C}

\newcommand{\lb}{\lbrace}
\newcommand{\rb}{\rbrace}
\newcommand{\la}{\langle}
\newcommand{\ra}{\rangle}
\renewcommand{\phi}{\varphi}
\newcommand{\eps}{\varepsilon}

\DeclareMathOperator{\rk}{rk}

\DeclareMathOperator{\cone}{cone}

\DeclareMathOperator{\Hom}{Hom}

\DeclareMathOperator{\Id}{Id}

\DeclareMathOperator{\Img}{Im}
\DeclareMathOperator{\GL}{GL}

\DeclareMathOperator{\Homs}{\mathscr{H}\text{\kern -3pt {\calligra\large om}}\,}

\DeclareMathOperator{\str}{star}

\DeclareMathOperator{\lk}{lk}

\DeclareMathOperator{\depth}{depth}

\DeclareMathOperator{\rint}{int}
\DeclareMathOperator{\const}{const}
\DeclareMathOperator{\SL}{SL}
\DeclareMathOperator{\PSL}{PSL}
\DeclareMathOperator{\cat}{cat}
\DeclareMathOperator{\Inc}{Inc}

\renewcommand{\leq}{\leqslant}
\renewcommand{\geq}{\geqslant}

\theoremstyle{plain}
\newtheorem{thm}{Theorem}
\newtheorem{lm}[thm]{Lemma}

\theoremstyle{definition}

\newtheorem{ex}[thm]{Example}
\newtheorem{defn}[thm]{Definition}

\tikzcdset{arrow style=tikz, diagrams={>=stealth}}

\begin{document}
	
	\title[Brieskorn spheres and torus manifolds]{
		Brieskorn spheres bound orbit spaces of torus manifolds
	}
	
	\author{Grigory Solomadin}
	\address[G.\,Solomadin]{Philipps-Universit\"at Marburg, Germany}
	\email{grigory.solomadin@gmail.com}
	
	\begin{abstract}
		We construct a smooth, equivariantly formal, simply connected torus $8$-manifold over a contractible $4$-manifold with the boundary a Brieskorn homology $3$-sphere having a nontrivial fundamental group. 
		We prove extension to a torus graph for GKM graphs of GKM$_{4}$ manifolds in complexity $2$, and $3$ under a finite fundamental group assumption (that is not satisfied, in general).
	\end{abstract}
	
	\keywords{Torus action, Brieskorn manifold, real blow-up, manifold with corners}
	
	\subjclass[2020]{Primary: 57S12, 57S25. Secondary: 13F55}
		
	\date{\today}
	\maketitle

	Consider the Brieskorn manifold
	\[
		\Sigma(p,q,r):=\lb z=(z_{0},z_{1},z_{2})\in \C^{3}\colon z_{0}^{p}+z_{1}^{q}+z_{2}^{r}=0,\ 
		|z|=1\rb.
	\]
	For pairwise coprime integers $p,q,r>1$ it is a homology $3$-sphere having a nontrivial (perfect) fundamental group 
	\[
		\pi_{1}(\Sigma(p,q,r))\cong \la u,v,w\mid u^{p}=v^{q}=w^{r}=uvw \ra.
	\]
	In this note, by definition a \emph{torus manifold} $M^{2n}$ is a closed, simply connected and \emph{equivariantly formal}, i.e. $H^{odd}(M^{2n};\Z)=0$ holds~\cite{ma-pa-06}, smooth or topological manifold equipped with an effective action of the torus $T^{n}=(S^{1})^{n}$ having a non-empty and finite fixed point set.
	It is known that the fundamental group $\pi_{1}(Q_{i})$ is perfect for a smooth torus manifold $M^{2n}$ and $2\leq i\leq n$~\cite[Thm.~2]{ay-ma-23}, where $Q_{i}$ denotes the $i$-skeleton of the orbit space $Q=M^{2n}/T^{n}$.
	In this paper, we construct torus manifolds with a non-simply connected $2$-skeleton of the orbit space.

	\begin{thm}\label{thm:top}
		For any pairwise coprime integers $p,q,r>1$, there exists a topological torus manifold $M^{8}$ with the contractible orbit space $Q=M^{8}/T^{4}$ with boundary $Q_{3}=\Sigma(p,q,r)$ having $\pi_{1}(Q_{2})=\pi_{1}(Q_{3})$ a nontrivial perfect group.
	\end{thm}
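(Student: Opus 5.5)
The plan is to build $M^{8}$ by the canonical model construction of Davis--Januszkiewicz type: $M=(Q\times T^{4})/\sim$. Here $Q$ is a contractible $4$-manifold with corners and $\sim$ collapses the circle subgroups prescribed by a characteristic function on the facets. The construction has four steps, listed next; the third is the main obstacle.

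\emph{Step 1 (choose $Q$).} It is known (Freedman in the topological category, or explicitly for many $\Sigma(p,q,r)$) that every homology $3$-sphere bounds a contractible topological $4$-manifold $W$; take $W$ with $\partial W=\Sigma(p,q,r)$. We then give $\partial W$ the structure of a manifold with corners, i.e.\ a decomposition of $\Sigma(p,q,r)$ into facets $F_{1},\dots,F_{m}$ such that every face is acyclic. For the face poset to look like that of a simple polytope with vertices, the decomposition must be dual to a simplicial cell structure. I would obtain it from a triangulation $K$ of $\Sigma(p,q,r)$, taking the facets to be the dual cells (stars of vertices in the barycentric subdivision). These are cones, hence contractible, and all their intersections are contractible as well. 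Then $Q_{3}=\partial Q=\Sigma(p,q,r)$. Moreover $Q_{2}$, the union of the $2$-dimensional faces, is the dual $2$-skeleton, which carries $\pi_{1}(\Sigma)$ because the $3$-cells only add relations on top of it in a standard handle argument. Indeed, the dual $2$-skeleton is the complement of neighbourhoods of the vertices of $K$ deformation retracted, and removing points from a $3$-manifold does not change $\pi_{1}$. So $\pi_{1}(Q_{2})\cong\pi_{1}(Q_{3})\cong\la u,v,w\mid u^{p}=v^{q}=w^{r}=uvw\ra$, which is nontrivial and perfect.

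\emph{Step 2 (characteristic function).} We need $\lambda\colon\{F_{i}\}\to\Z^{4}$ (primitive vectors) such that, at each vertex of $Q$ (a $4$-simplex's worth of facets, i.e.\ a tetrahedron of $K$ with its $4$ vertices), the four vectors form a basis of $\Z^{4}$. This is exactly a map of the vertex set of $K$ to $\Z^{4}$ that is unimodular on every facet of $K$. Here I would use the linear-algebraic trick for manifolds: with $4$ colours available on a $3$-dimensional complex $K$, any vertex colouring by $e_{1},\dots,e_{4}$ that is proper on facets works. The barycentric subdivision of $K$ is properly $4$-colourable by dimension of the original simplex, so passing to the barycentric subdivision (still a triangulation of $\Sigma$, with dual cells still contractible) gives a characteristic function with values in the standard basis $\{e_{1},e_{2},e_{3},e_{4}\}$.

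\emph{Step 3 (check the axioms of a torus manifold).} $M=Q\times T^{4}/\sim$ is a closed topological $8$-manifold: near a point in the interior of a codimension-$k$ face it is locally $\R^{8-2k}\times\C^{k}$, because the $\lambda$ are unimodular. The action is effective, and the fixed points are the vertices of $Q$, hence finitely many and nonempty. The main obstacle is to show that $M$ is simply connected and satisfies $H^{odd}(M;\Z)=0$ even though faces are only acyclic and $\partial Q$ is not simply connected. For simple connectivity, $\pi_{1}(M)$ is the quotient of $\pi_{1}(Q)\times\Z^{4}$ by the normal closure of the $\lambda(F_{i})$. Since $Q$ is contractible and the $\lambda$ span $\Z^{4}$, this gives $\pi_{1}(M)=1$; this uses only that $Q$ (not $Q_{2}$) is simply connected. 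For cohomology I would use the result of Masuda--Panov \cite{ma-pa-06}: $H^{odd}(M)=0$ holds iff every face of $Q$ is acyclic. Alternatively, I would run the filtration/Mayer--Vietoris argument by the face skeleta directly, where acyclicity of all faces (including $Q$ itself) gives the equivariant cohomology as the face ring and $H^{odd}(M)=0$. Since Step 1 ensured every face is acyclic, including $Q$ itself which is contractible, the theorem follows.
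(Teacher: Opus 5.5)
Your construction is the paper's: Freedman's contractible $W^{4}$, a triangulation whose barycentric subdivision $K$ is coloured by dimension to give a unimodular $\lambda$ with values in the standard basis, the dual-cell face structure on $\partial W^{4}$, and the Davis--Januszkiewicz quotient. The identity $\pi_{1}(Q_{2})=\pi_{1}(Q_{3})$ also matches: $Q_{3}$ is obtained from $Q_{2}$ by attaching $3$-cells. Note that $3$-cells add no relations at all, despite your wording; your puncture argument is the correct justification.

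The genuinely different part is the proof of $H^{odd}(M^{8};\Z)=0$. You cite the face-acyclicity criterion of Masuda--Panov, which is what the paper uses for the smooth theorem~\ref{thm:mainsm}. For the topological $M^{8}$ the paper instead argues directly. The Borel construction is homotopy equivalent to $DJ(K)$, so $H^{*}_{T}(M^{8};\Z)\cong\Z[K]$. Reisner's criterion makes $\Bbbk[K]$ Cohen--Macaulay for $\Bbbk=\Q,\Z/p\Z$, since links of nonempty simplices are spheres and $\lk_{K}\varnothing=K$ is the homology sphere. A Koszul-complex/UCT argument upgrades the colouring l.s.o.p.\ to a regular sequence over $\Z$, and graded Nakayama gives freeness of $\Z[K]$ over $H^{*}(BT;\Z)$.

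Your route is shorter. However, Masuda--Panov is formulated for smooth actions, so you should remark that their proof for face-acyclic $Q$ only uses the canonical model $(Q\times T)/\sim$ and Poincar\'e--Lefschetz duality on the faces, and hence applies here. The paper's route is self-contained in the topological category and shows exactly where the homology-sphere property enters. If you take your ``direct'' alternative, note that $H^{*}_{T}(M)\cong\Z[K]$ alone does not give $H^{odd}(M)=0$. You still need $\Z[K]$ to be free over $H^{*}(BT;\Z)$, which is precisely the Cohen--Macaulay/regular-sequence step above.

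Finally, your formula $\pi_{1}(M)\cong(\pi_{1}(Q)\times\Z^{4})/\la\lambda(F_{i})\ra$ is a standard shortcut for the paper's explicit Seifert--Van Kampen computation. The paper works over the free part and a collar of $\pi^{-1}(\partial W^{4})$, where the $T^{4}$-bundle over $\Sigma(p,q,r)$ is trivial because $H^{2}(\Sigma(p,q,r);\Z)=0$.
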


	The proof of theorem~\ref{thm:top} is based on the Davis-Januszkiewicz construction applied to a manifold bounding $\Sigma(p,q,r)$ with the topological face structure dual to a double triangulation on the boundary.
	(The homology sphere property is necessary for the proof of equivariant formality of $M^{8}$.)
	For the relevant statement in the setting of smooth torus actions, one has to consider a smaller class of orbit spaces.
	For example, as it is well known, the Poincar\'e sphere $\Sigma(2,3,5)$ does not bound a smooth contractible manifold (e.g. see~\cite[\S2.3]{ma-16}).
	On the other hand, the Brieskorn sphere $\Sigma(2,5,7)$ bounds a smooth contractible Mazur manifold~\cite{ak-ki-79}.
	The proof strategy does not carry over to the smooth setting directly, since the double-triangulation face structure on $\partial Q$ is not smooth, in general.
	Namely, there are PL-creases at the boundaries of the respective cells (outside the respective corners), e.g. see figure~\ref{fig:cell}. 
	We are able to prove the smooth version of the above theorem, using a different method.

	\begin{thm}\label{thm:mainsm}
		For any pairwise coprime integers $p,q,r>1$ such that $\Sigma(p,q,r)$ bounds a smooth contractible manifold $W^{4}_{m}$, there exists a locally standard, smooth torus manifold $N^{8}$ with the contractible orbit space $Q=N^{8}/T^{4}$ with boundary $Q_{3}$ that is homotopy equivalent to $\Sigma(p,q,r)$ having $\pi_{1}(Q_{2})=\pi_{1}(Q_{3})$ a nontrivial perfect group.		
	\end{thm}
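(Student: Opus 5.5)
We build a smooth manifold with corners $Q$ out of $W:=W^{4}_{m}$ and then apply the Davis--Januszkiewicz (quotient) construction, $N^{8}=Q\times T^{4}/\sim$. The paper hints that $Q$ should be obtained by a real blow-up. The boundary $\Sigma(p,q,r)=\partial W$ does not come with a smooth face structure, so we will manufacture corners smoothly.

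\textbf{Step 1: build $Q$.} Pick a smooth, closed $1$-dimensional configuration $\Gamma\subset\Sigma(p,q,r)=\partial W$. The natural candidate is a collection of disjoint embedded circles, or a trivalent graph, whose complement is a union of handlebodies. We then blow up $W$ along $\Gamma$, iteratively and in a neighbourhood of $\partial W$. Real blow-ups of smooth submanifolds transverse to the existing strata produce manifolds with corners. The effect is to replace a tubular neighbourhood of $\Gamma$ by new facets of the form circle/graph $\times$ polygon. After the blow-ups, the boundary of $Q$ is $\partial W$ with the neighbourhood of $\Gamma$ modified.

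Before applying the construction we check three things:
\begin{itemize}
\item[(a)] $Q$ is still contractible, since it is homotopy equivalent to $W$.
\item[(b)] $Q_{3}=\partial Q$ is homotopy equivalent to $\Sigma(p,q,r)$.
\item[(c)] Every face of $Q$ is acyclic. Equivalently, $Q$ is a homology polytope, or at least its faces have vanishing odd homology with the right connectivity.
\end{itemize}
Condition (c) is what gives $H^{odd}(N;\Z)=0$ through the Masuda--Panov criterion~\cite{ma-pa-06}. Simple connectivity of $N$ follows from $Q$ being simply connected together with the existence of vertices and a spanning characteristic function.

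\textbf{Step 2: characteristic function.} We assign a primitive vector $\lambda(F)\in\Z^{4}$ to each facet $F$ so that, at every vertex, the four incident vectors form a basis of $\Z^{4}$. Making the combinatorics of the blown-up faces resemble a product of polygons with a known lattice structure (e.g. a product of simplices or cubes locally) should make this routine. The construction $N=Q\times T^{4}/\sim$ is then a smooth, locally standard $T^{4}$-manifold with finitely many fixed points, one per vertex.

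\textbf{Step 3: the $2$-skeleton.} We need $\pi_{1}(Q_{2})=\pi_{1}(Q_{3})$. We arrange $\Gamma$ so that $\partial Q$ minus the open $3$-faces is obtained from $\partial Q$ by removing open $3$-balls. Then $Q_{2}\hookrightarrow Q_{3}$ is $\pi_{1}$-surjective, and it is injective because deleting $3$-cells from a $3$-manifold does not change $\pi_{1}$. This gives the nontrivial perfect group $\pi_{1}(\Sigma(p,q,r))$.

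\textbf{Main obstacle.} The hardest part is Step 1(c) together with Step 3. We must choose $\Gamma$ so that each $3$-face is a homology ball and all lower faces are acyclic (or at least have $H^{odd}=0$ in the sense required). We also need the faces to intersect in connected pieces, so that $Q$ has vertices and the equivariant formality computation via the face filtration goes through.

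First, we would take $\Gamma$ to be the $1$-skeleton of a handle decomposition of $\Sigma(p,q,r)$ dual to a Heegaard splitting. We would then blow up its vertices first and its edges second, so that the $3$-faces are the complementary balls. We would then verify acyclicity face by face using Mayer--Vietoris.
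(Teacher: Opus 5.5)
\textbf{The gap: a $1$-dimensional blow-up centre cannot work.} The problem is in Step 1. Blowing up $W$ only along a $1$-dimensional $\Gamma\subset\partial W$ cannot give a torus manifold, whatever $\Gamma$ you pick. After blowing up the vertices $v$ and edges $e$ of $\Gamma$, the boundary hypersurfaces of $Q$ are $E_{v}\cong D^{3}$, $E_{e}$ (a copy of $e\times D^{2}$), and the strict transform $\widetilde{Y}$ of the single $3$-dimensional stratum $Y=\Sigma(p,q,r)\setminus\Gamma$. Points of depth $k$ correspond to flags of $k$ non-maximal strata. The longest such flag is $v\subset e\subset Y$, so $Q$ has no depth-$4$ corners. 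Hence $Q$ has no vertices, and $N$ has no $T^{4}$-fixed points.

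Independently, $\widetilde{Y}\cong\Sigma(p,q,r)\setminus\nu(\Gamma)$ has $H_{1}\cong H^{1}(\Gamma;\Z)$ by Alexander duality in the homology sphere. For the spine of a genus-$g$ Heegaard handlebody this is $\Z^{g}\neq 0$, so the faces are not acyclic and the Masuda--Panov criterion fails. Your Step 3 requirement that the $3$-faces be the complementary balls is also impossible. If $\Sigma(p,q,r)\setminus\nu(\Gamma)$ were a ball, $\partial\nu(\Gamma)$ would be a single sphere, so $\Gamma$ would be a tree and $\Sigma(p,q,r)$ would be a union of two $3$-balls, i.e. $S^{3}$.

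\textbf{The missing idea: a stratification with $2$-dimensional strata.} You need a regular cell decomposition of $\Sigma(p,q,r)$, made into a smooth conical stratification of the collar $\Sigma(p,q,r)\times[0,1)$, in which all cell closures and all links $\overline{l(X,Y)}$ are contractible. You must then blow up all cells of dimensions $0,1,2$ in increasing order of dimension.

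With this, facets $F_{X}$ are indexed by the cells $X$, faces by flags of cells, and vertices by complete flags $X_{0}\subset X_{1}\subset X_{2}\subset X_{3}$ with local chart $\R^{4}_{\geq 0}$. So $\lambda(F_{X}):=e_{\dim X}$ is automatically a characteristic function; this is the concrete content your Step 2 lacks. Each face $F_{\sigma}$ fibres over $\overline{\min\sigma}$ with fibre a face of the blown-up link, which gives contractibility of all faces by induction on dimension. Faces of dimension $\leq 3$ are then balls by Perelman, so $Q_{3}$ is a regular CW complex and $\pi_{1}(Q_{2})=\pi_{1}(Q_{3})$.

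The paper builds such a decomposition from the Seifert fibration. The orbifold base is cut into two hexagons and three discs around the cone points. Each of the five solid tori over these pieces is cut by meridian discs into two cylinders. The resulting complex is simple, so vertex links look like $\partial\Delta^{3}$ and conicality can be checked dimension by dimension. Finally, smoothness of $Q\times T^{4}/\sim$ is not automatic; the paper invokes the realization theorem of~\cite{ku-ka-25} for that.
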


	The proof of theorem~\ref{thm:mainsm} is based on the construction of~\cite{ku-ka-25}, which requires a smooth manifold with corners structure on the collar of $\Sigma(p,q,r)$.
	We produce such a structure (not changing the homotopy type of the boundary) by first constructing a smooth conical stratification of $\Sigma(p,q,r)\times [0,1)$, then applying the real oriented blow-up of this conical stratification.
	Equivariant formality of the resulting space $N^{8}$ follows from face acyclicity of the orbit space $Q$~\cite{ma-pa-06}, because all faces in the manifold with corners $Q$ are contractible.
	Furthermore, the resulting faces are homeomorphic to the standard disks in dimensions $\leq 3$ by the Poincar\'e conjecture solution.
	This shows that the face structure on $Q_{3}$ is that of a regular CW complex structure, implying $\pi_{1}(Q_{2})=\pi_{1}(Q_{3})$.
	The provided conical stratification holds more generally for all Brieskorn manifolds.
	However, by Poincar\'e--Lefschetz duality, only homology spheres among them bound contractible manifolds, and orbit spaces of torus manifolds are contractible (by face acyclicity and lifting of curves from $Q$ to $M^{2n}$, combined with the Whitehead theorem).
	Since $\partial Q$ is a regular CW complex by the construction, theorems~\ref{thm:top},~\ref{thm:mainsm} imply existence of a (topological or smooth, respectively) torus manifold $M^{2n}$ with a nontrivial $\pi_{1}(Q_{2})=\pi_{1}(Q_{3})$ for all $n\geq 4$ by taking Cartesian product with $\C P^{n-4}$ equipped with the standard $T^{n-4}$-action.

	The regular CW complex $Q_{2}$ for any torus manifold $N^{8}$ given by theorem~\ref{thm:mainsm} does not admit a \emph{shelling}~\cite{bj-84}, which follows by contradiction from inductively applying the Seifert--Van Kampen theorem.
	Admitting a shelling was of importance in the study of a particular class of torus $T^{k}$ actions on $M^{2n}$ of arbitrary \emph{complexity} $c=n-k$ called \emph{GKM manifolds}~\cite{go-so-25}, and served as the original motivation for the search of examples constructed in theorem~\ref{thm:mainsm}.
	A GKM manifold~\cite{gu-za-01} admits a labelling on the graph $Q_{1}$; such a decorated graph satisfies several additional properties and is called its \emph{GKM graph}; in the case $n=k$ this graph is called a \emph{torus graph} (for the relevant definitions of GKM theory see~\S\ref{sec:thm3proof} below).
	The property $\pi_{1}(Q_{2})=1$ for a GKM$_{4}$ manifold implies extension of its GKM graph to a torus graph~\cite{ay-ma-so-23}.
	In the case of a nontrivial $\pi_{1}(Q_{2})$ of a GKM$_{4}$ manifold, one can prove extension of GKM graphs to torus graphs in complexity $1$~\cite[Thm.~4]{go-so-25}.
	We provide a further enhancement of~\cite[Thm.~4]{go-so-25},~\cite[Thm.~2]{go-so-26}, leading to a generators-and-relations description of the respective graph equivariant cohomology ring as follows (the necessary ring-theoretical definitions can be found e.g. in~\cite{go-so-26}).
	
	\begin{thm}\label{thm:mainext}
		Let $M^{2n}$ be a GKM$_{4}$ manifold with the torus $T^{k}$-action, $\rk T^{k}=k$.
		If $n-k=2$, or $n-k=3$ and $\pi_{1}(Q_{2})$ is finite, then $\Gamma$ extends to a torus graph.
		In both cases, the graph equivariant cohomology ring $H_{T}^{*}(\Gamma;\Bbbk)$ is isomorphic to the quotient of the face ring $\Bbbk[S(\Gamma)]$ by the ideal generated by some $n-k$ elements of degree $2$.
		Here, either $\Bbbk=\Q$, or $\Bbbk=\Z$ and all stabilizers of the GKM manifold $M^{2n}$ are connected.
	\end{thm}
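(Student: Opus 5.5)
The plan is to reduce the extension problem to a lifting problem for the axial function, and to handle it one $2$-dimensional face at a time. A GKM graph $\Gamma$ has an axial function $\alpha\colon E(\Gamma)\to H^{2}(BT^{k})$ and a connection. Because the manifold is GKM$_{4}$, the $2$-faces of $\Gamma$ (the cells of $Q_{2}$) are well defined, and the connection preserves them. Extending $\Gamma$ to a torus graph means finding a lift $\tilde\alpha\colon E(\Gamma)\to \Bbbk^{n}$ with three properties: it is compatible with the projection $\Bbbk^{n}\to H^{2}(BT^{k};\Bbbk)$; at each vertex the $n$ lifted weights form a basis (unimodular when $\Bbbk=\Z$ and all stabilizers are connected); and it satisfies the congruence relations along every edge.

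Following \cite{ay-ma-so-23} and \cite[Thm.~4]{go-so-25}, we encode the choices of lifts as a local system. Over each vertex, the lifts compatible with $\alpha$ form an affine space modelled on $\Hom(\Bbbk^{n-k},\Bbbk)^{n}$, or more precisely on its subset cut out by the basis condition. The connection along an edge gives transition maps between these spaces. Monodromy around each $2$-face is trivial, since the GKM$_{4}$ condition forces every $2$-face to be the GKM graph of a torus-manifold face. Hence the obstruction to a global lift is a class in $H^{1}(Q_{2};\mathcal{L})$, or equivalently a representation $\rho\colon\pi_{1}(Q_{2})\to G$. Here $G$ is the structure group of the gauge freedom, namely the group of unipotent automorphisms of $\Bbbk^{n}$ fixing the quotient $\Bbbk^{k}$; modulo the common translation it is essentially $\GL_{n-k}(\Bbbk)\ltimes \Bbbk^{(n-k)\times k}$.

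We then use that $\pi_{1}(Q_{2})$ is perfect, by \cite[Thm.~2]{ay-ma-23} in the smooth setting, or by the GKM analogue used in \cite{go-so-25}. For $n-k=2$, the relevant image group is a subgroup of $\GL_{2}(\Bbbk)\ltimes\Bbbk^{2\times k}$ preserving the integrality structure. The first step is to argue that the translation part is abelian and therefore killed by perfectness. The second step is to argue that a perfect subgroup of the linear part that also preserves the lattice of weights is finite. The third step is to show that a finite perfect subgroup of $\GL_{2}$ fixing the weight data is trivial: a finite subgroup of $\GL_{2}(\Q)$ is cyclic or dihedral, hence solvable, so any perfect one is trivial. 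For $n-k=3$ we add the hypothesis that $\pi_{1}(Q_{2})$ is finite. Then $\rho$ has finite image in $\GL_{3}$, and finite subgroups of $\GL_{3}(\Q)$ are solvable (they are extensions of $\pm1$ by subgroups of $S_{4}$ and related groups), so a perfect image is trivial. Trivial monodromy gives the global lift $\tilde\alpha$, i.e. an extension to a torus graph.

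For the ring statement, a torus graph has the property that $H_{T^{n}}^{*}$ of the extended graph is isomorphic to the face ring $\Bbbk[S(\Gamma)]$ (Maeda--Masuda--Panov, with \cite{go-so-26} covering the coefficient conditions). Restricting from $T^{n}$ to $T^{k}$ amounts to taking the quotient by the $n-k$ linear forms spanning the kernel of $H^{2}(BT^{n})\to H^{2}(BT^{k})$. To show that this quotient computes $H_{T^{k}}^{*}(\Gamma)$, one checks that these forms make up a regular sequence; this follows from $H^{odd}=0$ and freeness, since the face ring of the acyclic face poset is Cohen--Macaulay.

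The main obstacle is the second step, the claim that a perfect $\pi_{1}$ can only map to a finite, hence trivial, group in rank $2$ without any finiteness hypothesis. To prove it, we will use that the image must preserve the finitely many weight vectors at a vertex, up to the connection. This should force the image into a finite stabilizer. That argument works in rank $2$ but fails in rank $3$, which explains why the theorem needs the extra finiteness assumption when $n-k=3$.
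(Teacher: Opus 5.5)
\paragraph{Where the proposal fails.} Your framework is the paper's in different language. The monodromy of lifts around loops in $Q_{2}$ is the action of $H_{1}=\pi_{1}(Q_{2})$ on the axial function group $A(\widetilde{\Gamma})\cong\Z^{n}$ of the universal GKM cover, with block form $\bigl(\begin{smallmatrix}A(h)&0\\ B(h)&\Id\end{smallmatrix}\bigr)$ and $A(h)\in\GL_{n-k}(\Z)$. Your case $n-k=3$ (a finite perfect subgroup of $\GL_{3}(\Q)$ is trivial) is essentially the paper's argument via Minkowski's bound $48<60$.

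The gap is the case $n-k=2$, exactly at the step you call the main obstacle. The monodromy does not preserve any finite set of weight vectors. Transporting a lift around a loop gives a \emph{different} lift of the same $\alpha$. The matrix relating the two lifts only has to lie in the infinite group of automorphisms of $\Z^{n}$ compatible with the projection to $\Z^{k}$. In the cover, it relates the lifted weights at two distinct vertices over the same vertex of $\Gamma$, and there are infinitely many such vertices when $H_{1}$ is infinite.

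Your own remark shows the mechanism cannot be right. A subgroup of $\GL_{c}(\Z)$ permuting a finite spanning set is finite for \emph{every} $c$, so the argument would also work for $c=3$ and the finiteness hypothesis would be superfluous. The real dichotomy is arithmetic: $\SL_{3}(\Z)$ is itself an infinite perfect group, while $\SL_{2}(\Z)$ has no nontrivial perfect subgroups. This is the missing idea:
\begin{enumerate}
\item Perfectness of $H_{1}$ forces $\det A(h)=1$.
\item The image in $\PSL_{2}(\Z)\cong\Z/2\Z*\Z/3\Z$ is, by Kurosh's subgroup theorem, a free product of a free group with copies of $\Z/2\Z$ and $\Z/3\Z$. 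Such a group has nontrivial abelianization unless it is trivial, so the image is trivial.
\item Hence the image of $H_{1}$ in $\SL_{2}(\Z)$ lies in $\{\pm\Id\}$, and being perfect it is trivial.
\end{enumerate}

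\paragraph{Further corrections.} Integrality is essential. Over $\Bbbk=\Q$ your gauge group contains the perfect group $\SL_{2}(\Q)$, so the argument must be run on the lattice $A(\widetilde{\Gamma})\cong\Z^{n}$, whatever coefficients appear in the ring statement.

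The translation part cannot be disposed of first. The relation $B(gh)=B(h)+B(g)A(h)$ makes $B$ a crossed homomorphism, and it can be nontrivial on a perfect group when $A$ is nontrivial. For instance, the group of all such block matrices with $A\in\SL_{3}(\Z)$ is perfect. Only once $A\equiv\Id$ is known does $B$ become a homomorphism to an abelian group, hence zero. That gives $A(\Gamma)=\Z^{n}$ and the extension by Kuroki's criterion.

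For the ring statement, the paper simply invokes \cite[Thm.~2]{go-so-26}. Your regular-sequence sketch does not by itself identify the quotient of the face ring with $H^{*}_{T}(\Gamma;\Bbbk)$, because $T^{n}$ acts only on the graph and not on $M$. You should rely on that result as well.
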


	The proof of the first part for theorem~\ref{thm:mainext} follows by a short additional to~\cite[Thm.~4]{go-so-25} argument involving Minkovski's theorem on the subgroups in $SL_{c}(\Z)$, $c=2,3$.
	The second part then follows trivially from the results of~\cite[Thm.~2]{go-so-26}.
	
	\section{A topological torus manifold}\label{sec:top}

\begin{proof}[Proof of theorem~\ref{thm:top}]
	By the result of Freedman~\cite[Thm.~1.4']{fr-82}, any integer homology $3$-sphere bounds a contractible topological manifold.
	Let $W^{4}$ be such a topological manifold with $\partial W^{4}=\Sigma(p,q,r)$.
	Choose any triangulation $K_{1}$ of $\Sigma(p,q,r)$ (which exists by~\cite[Thm.~10.6]{mu-61}) and let $K$ be the barycentric subdivision of $K_{1}$.
	The dual cell
	\[
		D(I):=
		\lb \sigma=(I_{0}\subset\cdots\subset I_{j})\in K'\colon I\subseteq I_{0}\rb\subseteq
		|K'|=|K|,
	\] 
	of any simplex $I\in K$, $I\neq\varnothing$, in the barycentric subdivision $K'$ of $K$ is homeomorphic to $\cone(\lk_{K} I)$, and the link 
	\[
		\lk_{K} I:=
		\lb J\in K\colon I\sqcup J\in K \rb
	\]
	of $I\neq\varnothing$ has dimension $2-\dim I\leq 2$ and therefore is a standard sphere up to a homeomorphism.
	Additionally define $D(\varnothing):=W^{4}$.
	The cells $D(I)$, $I\in K$, together with the maximal cell, constitute a topological nice manifold with corners structure on $W^{4}$.
	Namely, any point of $\rint D(I)$ has a neighborhood homeomorphic to $\R^{4-|I|}\times \R^{|I|}_{\geq 0}$, and the facets $D(v)$, $v\in I$, map to the respective coordinate hypersurfaces (see figure~\ref{fig:cell}). 
	Denote such a manifold with corners by $K^{*}$.
	
	Let $v_{\sigma}$ be the vertex of $K$ corresponding to the simplex $\sigma\in K_{1}$.
	There is the canonical coloring of vertices in $K$ by $[3]:=\lb 0,1,2,3\rb$ given by $v_{\sigma}\mapsto \dim \sigma$.
	It is proper, i.e. the vertices of any $i$-dimensional simplex in $K$ are colored in $i+1$ pairwise distinct colors for all $i\leq 3$.
	This coloring induces the characteristic function $\lambda\colon \mathcal{F}(K^{*})\to \Z^{4}$ on the set $\mathcal{F}(K^{*})$ of facets for $K^{*}$ to the simplicial complex $K$ (a so-called \emph{pullback from a linear model}~\cite{da-ja-91}).
	\begin{figure}
		\centering
		\input{creased_permutohedron.tex}
		\caption{The cell of a tetrahedron (formed in its second barycentric subdivision) dual to its barycenter is a $PL$-model for the $3$-dimensional permutohedron\label{fig:cell}}		
	\end{figure}	
	The Davis-Januszkiewicz construction~\cite{da-ja-91} (for $T=T^{4}:=(S^{1})^{4}$) gives the topological torus manifold
	\[
		M^{8}=M(W^{4},\lambda):=
		\bigl(W^{4}\times T^{4}\bigr)/\sim,\ 
		(x,t)\sim (x',t')\Leftrightarrow 
		\bigl(x=x'\in \rint D(I)\ \&\ t^{-1}t'\in T(I)\bigr).
	\]
	Here, $T(I)\subseteq T^{4}$ is the subgroup spanned by the images of homomorphisms $\lambda(F)\in \Z^{4}\cong\Hom(S^{1},T^{4})$ in $T^{4}$, where $F$ runs over all facets $F\supseteq D(I)$ in $K^{*}$.
	(We put $T(\varnothing):=\lb 1\rb$.)
	The topological manifold property (and furthermore, local standardness of the $T$-action) is proved in~\cite{da-ja-91} (also see~\cite{ko-ku-25} for the detailed modern treatment for the topological properties of the Davis-Januszkiewicz construction over a stratified base).
	The Borel space of the $T$-action on $M^{8}$ is homotopy equivalent to the \emph{Davis-Januszkiewicz space}
	\[
		DJ(K):=
		\bigcup_{I\in K} \prod_{i\in I} \C P^{\infty}\times \prod_{j\in [N]\setminus I} \lb*\rb \subset (\C 	P^{\infty})^{N},
	\]
 	for the simplicial complex $K$ on the vertex set $[N]$, where $\lb *\rb$ denotes a based point.
	The $T^{4}$-space $M^8=M(W^{4},\lambda)$ equals
	\[
		\biggl(\bigsqcup_{I\in \cat K} D(I)\times \bigl(T/T(I)\bigr)\biggr)/\sim',\ 
		(x,\Inc_{I\to J}J)\sim' (p_{I\to J}x,J),
	\]
 	for projections $p_{I\to J}\colon T/T(I)\to T/T(J)$ of tori, $I\subset J$, where $\cat K$ denotes the small category of all simplices (and $\varnothing$ as the initial object) in $K$ with inclusions as morphisms, and $\Inc_{I\to J}\colon D(J)\to D(I)$ is induced by the natural inclusion of geometric realizations of simplices.
	(This is nothing but the homotopy colimit of the respective $(\cat K)$-diagram of tori by taking the union of simplices in $K'$, i.e. chains of simplices in $K$, with the least fixed element in the chain.)
	The Borel space of this $T^{4}$-space is the homotopy colimit of the composition for the classifying functor $B$ with the $T$-diagram of coordinate tori $T(I)$ (stabilizers of the torus action) in $T^{4}$, e.g. see~\cite[\S4.1]{df-04}.
	The groups $T(I)$ are just coordinate subgroups in $T^4$, and the respective diagram maps are their embeddings.
	Then the homotopy colimit amounts to taking the colimit (i.e. the respective diagram is Reedy cofibrant), which is the union of their classifying subspaces in $(\C P^{\infty})^{N}$, that is, $DJ(K)$.
	For the alternative argument see~\cite{da-ja-91}.

	We claim that the algebra $\Bbbk[K]$ is Cohen-Macaulay over fields $\Bbbk=\Q,\Z/p\Z$ for any prime $p$.
	Namely, the vanishing 
	\[
		\widetilde{H}_{i}(\lk_{K} I;\Z)=0,\ i<2-\dim I,\ I\in K, 
	\]
	of the reduced integral homology for the links implies the corresponding homology vanishing with coefficients $\Bbbk=\Q,\Z/p\Z$ for any prime $p$ by the Universal Coefficients Theorem.
	Then one applies Reisner's theorem, e.g. see~\cite[Thm.~3.3.10]{bu-pa-15}.
	This homology vanishing condition holds, since the links $\lk_{K} I$ for $\varnothing\neq I\in K$ are spheres by the above~\cite{mo-51}, and $\lk_{K} \varnothing=K$ is a homology sphere $\Sigma(p,q,r)$ by the construction.
	
	The Stanley-Reisner ring $\Z[K]\cong H^{*}(DJ(K);\Z)\cong H^{*}_{T}(M^{8};\Z)$ has the $H^{*}(BT)$-module structure given by the coloring of vertices for $K$.
	The image $\theta_{i}\in \Z[K]$, $i=1,\dots,4$, of a basis for of $H^{2}(BT;\Z)$ is a l.s.o.p. in $\Z[K]$ by unimodularity of $\lambda$ and~\cite[Lm.~3.3.2]{bu-pa-15}.
	Let $R^{*}:=H^{*}(BT;\Z)$ be a graded ring.
	The ring $\Bbbk[K]$ is free over $R^{*}\otimes\Bbbk$ for $\Bbbk=\Z/p\Z,\Q$ with $p$ any prime.
	There is the Koszul resolution $K^{*}(\underline{\theta},\Z)$ of the $R^{*}$-module $\Z$ defined by the l.s.o.p. $\underline{\theta}:=(\theta_{1},\dots,\theta_{4})$.
	One has $K^{*}(\underline{\theta},\Z)\otimes_{\Z}\Bbbk=K^{*}(\underline{\theta},\Bbbk)$, and such a complex of $R^{*}\otimes\Bbbk$-modules is exact for any $\Bbbk=\Q,\Z/p\Z$ by the above.
	By Cohen-Macaulayness, the complex $\Bbbk[K]\otimes_{R^{*}} K^{*}(\underline{\theta},\Z)$ is exact for any $\Bbbk=\Q,\Z/p\Z$.
	Hence, the complex $\Z[K]\otimes_{R^{*}} K^{*}(\underline{\theta},\Z)$ is exact by the UCT.
	This implies that $\theta_{i}$ is a regular sequence for $\Z[K]$ as well.
	Therefore, $\Z[K]/R^{>0}\Z[K]$ is torsion-free, i.e. a free $\Z$-module.
	Then by the graded Nakayama lemma $\Z[K]$ is a free $R^{*}$-module.
	We conclude that $M^{8}$ is equivariantly formal. 
	
	To find $\pi_{1}(M^{8})$ let $U_{f}:=\pi^{-1}(\rint W^{4})$ be the free part of the $T^{4}$-action on $M^{8}$, where $\pi\colon M^{8}\to W^{4}$ is the projection onto the orbit space, and let $U_{c}$ be the open collar neighbourhood of $\pi^{-1}(\partial W^{4})$ (induced by that of the boundary of the topological manifold $W^{4}$).
	Any principal $T^{4}$-bundle over $\pi(U_{f}\cap U_{c})\simeq \Sigma(p,q,r)$ (where $\simeq$ denotes a homotopy equivalence, which holds by the collar theorem for topological manifolds with boundary) is trivial, since the respective classifying map has a trivial homotopy class in
	\[
		[\Sigma(p,q,r),BT^{4}]\cong H^{2}(\Sigma(p,q,r);\Z^{4})=0.
	\]
	This applies in particular to the fiber bundle $T^{4}\to U_{f}\cap U_{c}\to  \Sigma(p,q,r)$.
	The map 
	\[
		\pi_{1}(U_{f}\cap U_{c})\cong 	\pi_{1}(U_{c})\times \Z^{4}\to \Z^{4}\cong \pi_{1}(U_{f})
	\] 
	induced by the inclusion is the projection onto the second factor.
	This follows directly from the homotopy long exact sequences for the trivial $T^{4}$-bundle embedding $U_{f}\cap U_{c}\to U_{f}$.
	The map
	\[
		\pi_{1}(U_{f}\cap U_{c})\cong 	\pi_{1}(U_{c})\times \Z^{4}\to \pi_{1}(U_{c})
	\] 
	induced by another inclusion is the projection onto the first factor, since there is a collection of $S^{1}$-generators of $T^{4}$ that are fibers of the projection and are collapsed each to a point (by unimodularity of $\lambda$).
	Hence, $\pi_{1}(M^{8})=1$ follows by the Seifert--Van Kampen theorem.
	
	Recall that $Q_{i}$ is the union in $Q$ of $\leq i$-dimensional closed faces $D(I)$, $I\in K$, $4-|I|\leq i$, which are $i$-dimensional topological disks, or $Q$ for $i=4$.
	In particular, $Q_{3}=\partial W^{4}$ is a regular CW complex obtained by gluing topological disks to $Q_{2}$.
	Hence, 
	\[
		\pi_{1}(Q_{2})\cong
		\pi_{1}(Q_{3})\cong	
		\pi_{1}(\Sigma(p,q,r)).
	\]
	The proof is complete.
	\end{proof}
	
	\section{A smooth torus manifold}\label{sec:sm}

	\subsection{Real oriented blow-ups and conical stratifications}

	We refer to~\cite{ga-04} for the definition of a real, oriented, spherical blow-up (\textit{real blow-up}, for short).
	The construction given below corresponds to the case when the respective building set is chosen to be maximal in the respective conical stratification.

	An $n$-dimensional (smooth) \textit{manifold with corners} (e.g. see~\cite{jo-12}) is a paracompact Hausdorff topological space $X$ equipped with an atlas $\lb(U_{a},\phi_{a})\rb_{a\in A}$ of \textit{charts with corners}, where $\phi_{a}\colon U_{a}\subseteq \R^{j}_{\geq 0}\times \R^{n-j}\to X$ is a homeomorphism from an open subset $U_{a}$ for some $0\leq j\leq n$, and the transition maps $\phi_{b}^{-1}\circ\phi_{a}$ are invertible and smooth (i.e. extend to smooth maps from the relatively open subset to an open neighborhood in the respective Euclidean space $\R^{j}\times \R^{n-j}$) for all $a,b\in A$.
	There is the well-defined function $\depth_{X}\colon X\to \Z_{\geq 0}$ sending $x\in X$ to the number of vanishing coordinates in a chart with corners.
	The boundary $\partial X$ of $X$ is defined as in~\cite{jo-12}; notice that neither $\partial X$ is connected, nor $\iota\colon\partial X\to X$ is an embedding, in general.
	A \emph{boundary hypersurface} of a manifold with corners $X$ is a closure of a connected component in $\iota(\partial X)\subset X$.

	A \textit{spherical slice} is the intersection of $\R^{a}\times \R^{b}_{\geq 0}$ with the open unit ball $B_{<1}(0)$ centered at the origin.
	A \textit{local real blow-up} of $D=D_{T}\times D_{N}$, where $D_{T}$ is a disk and $D_{N}$ is a spherical slice, is the map
	\[
		p\colon B_{0} D:=
		D_{T}\times [0,1)\times S_{N}\to D_{T}\times D_{N}=D,\ (r,t,l)\mapsto (r,tl),
	\]
	Here, $S_{N}$ is the intersection of the spherical slice closure with a sphere of unit radius centered at the origin.
	
	Let $M$ be a smooth manifold with corners, equipped with a smooth atlas with corners.
	Call a closed subset $X\subseteq M$ \textit{admissible} if at every $x\in X$ there is an open subspace $D(X,x)\ni x$ of $M$ that is diffeomorphic to the Cartesian product $D_{T}\times D_{N}$ of a disk $D_{T}$ (or its sector) and a spherical slice $D_{N}$, such that $X\cap D(X,x)$ maps to $D_{T}\times \lb 0\rb$.
	Then by definition the \textit{real blow-up} $p\colon B_{X} M\to M$ of $M$ along an admissible $X$ is the map defined on the manifold with corners $B_{X} M$ obtained by gluing together the local real blow-ups, defined as above.
	Let $E_{X}:=p^{-1}(X)\subset B_{X} M$ be the \textit{exceptional divisor} of the blow-up.

	One can iterate real blow-ups by ``the balls, beams and plates'' construction, starting from a collection $\mathcal{C}$ of locally closed submanifolds in a smooth manifold $M$ with corners by blowing-up the strict transforms of the closures for strata in any order that does not decrease the respective strata dimensions.
	For the real blow-up steps to be well-defined (namely, for the admissibility of the blow-up centers to hold), it is sufficient to assume that $\mathcal{C}$ is a smooth conical stratification, as defined below.
	See figure~\ref{fig:stratblowup} for the relevant example.

	\begin{figure}[ht]\centering
		\begin{tikzpicture}[scale=3,line join=round,line cap=round]
\definecolor{cT}{RGB}{65,105,225}
\definecolor{cV}{RGB}{230,90,70}
\definecolor{cE}{RGB}{235,190,40}
\definecolor{arc}{RGB}{42,42,42}
\fill[cT,fill opacity=0.20] (-0.816,-0.471) -- (-0.758,-0.533) -- (-0.687,-0.593) -- (-0.615,-0.644) -- (-0.536,-0.689) -- (-0.448,-0.730) -- (-0.360,-0.761) -- (-0.263,-0.787) -- (-0.170,-0.804) -- (-0.069,-0.814) -- (0.027,-0.816) -- (0.129,-0.809) -- (0.223,-0.796) -- (0.321,-0.773) -- (0.410,-0.744) -- (0.495,-0.709) -- (0.576,-0.667) -- (0.642,-0.626) -- (0.708,-0.577) -- (0.764,-0.527) -- (0.816,-0.471) -- (0.843,-0.378) -- (0.859,-0.287) -- (0.866,-0.198) -- (0.865,-0.114) -- (0.856,-0.022) -- (0.840,0.069) -- (0.816,0.160) -- (0.786,0.244) -- (0.751,0.325) -- (0.706,0.410) -- (0.659,0.486) -- (0.602,0.562) -- (0.540,0.634) -- (0.477,0.695) -- (0.406,0.755) -- (0.336,0.804) -- (0.258,0.850) -- (0.178,0.889) -- (0.085,0.922) -- (-0.003,0.942) -- (-0.074,0.925) -- (-0.146,0.902) -- (-0.221,0.869) -- (-0.290,0.833) -- (-0.367,0.784) -- (-0.445,0.724) -- (-0.514,0.661) -- (-0.578,0.591) -- (-0.641,0.512) -- (-0.693,0.432) -- (-0.743,0.343) -- (-0.782,0.256) -- (-0.814,0.166) -- (-0.840,0.069) -- (-0.856,-0.022) -- (-0.865,-0.120) -- (-0.865,-0.210) -- (-0.857,-0.298) -- (-0.840,-0.389) -- (-0.816,-0.471) -- cycle;
\fill[cT,fill opacity=0.20] (-0.866,-0.500) -- (-0.816,-0.471) -- (-0.778,-0.513) -- (-0.732,-0.556) -- (-0.635,-0.631) -- (-0.526,-0.694) -- (-0.407,-0.745) -- (-0.312,-0.775) -- (-0.207,-0.798) -- (-0.107,-0.811) -- (0.002,-0.816) -- (0.110,-0.811) -- (0.211,-0.798) -- (0.315,-0.775) -- (0.410,-0.744) -- (0.528,-0.693) -- (0.632,-0.632) -- (0.730,-0.558) -- (0.817,-0.472) -- (0.866,-0.500) -- (0.798,-0.603) -- (0.754,-0.657) -- (0.712,-0.702) -- (0.668,-0.744) -- (0.615,-0.789) -- (0.565,-0.825) -- (0.506,-0.862) -- (0.453,-0.892) -- (0.389,-0.921) -- (0.332,-0.943) -- (0.265,-0.964) -- (0.198,-0.980) -- (0.137,-0.991) -- (0.068,-0.998) -- (0.007,-1.000) -- (-0.063,-0.998) -- (-0.124,-0.992) -- (-0.193,-0.981) -- (-0.253,-0.968) -- (-0.312,-0.950) -- (-0.377,-0.926) -- (-0.433,-0.901) -- (-0.495,-0.869) -- (-0.547,-0.837) -- (-0.604,-0.797) -- (-0.652,-0.758) -- (-0.703,-0.711) -- (-0.745,-0.667) -- (-0.790,-0.613) -- (-0.826,-0.563) -- (-0.863,-0.504) -- cycle;
\fill[cT,fill opacity=0.20] (0.866,-0.500) -- (0.816,-0.471) -- (0.833,-0.417) -- (0.848,-0.356) -- (0.864,-0.234) -- (0.864,-0.108) -- (0.849,0.020) -- (0.826,0.124) -- (0.795,0.220) -- (0.756,0.314) -- (0.706,0.410) -- (0.648,0.501) -- (0.586,0.582) -- (0.518,0.656) -- (0.440,0.728) -- (0.336,0.804) -- (0.232,0.864) -- (0.118,0.911) -- (-0.000,0.944) -- (-0.000,1.000) -- (0.068,0.998) -- (0.129,0.992) -- (0.198,0.980) -- (0.257,0.966) -- (0.324,0.946) -- (0.381,0.925) -- (0.445,0.896) -- (0.499,0.867) -- (0.558,0.830) -- (0.608,0.794) -- (0.655,0.755) -- (0.706,0.708) -- (0.754,0.657) -- (0.793,0.609) -- (0.833,0.553) -- (0.866,0.501) -- (0.895,0.447) -- (0.924,0.383) -- (0.964,0.268) -- (0.978,0.208) -- (0.990,0.139) -- (0.998,0.070) -- (1.000,0.009) -- (0.999,-0.052) -- (0.993,-0.122) -- (0.982,-0.191) -- (0.968,-0.251) -- (0.948,-0.318) -- (0.927,-0.375) -- (0.902,-0.431) -- (0.870,-0.493) -- cycle;
\fill[cT,fill opacity=0.20] (-0.866,-0.500) -- (-0.816,-0.471) -- (-0.833,-0.417) -- (-0.848,-0.356) -- (-0.864,-0.234) -- (-0.864,-0.108) -- (-0.849,0.020) -- (-0.828,0.118) -- (-0.795,0.220) -- (-0.756,0.314) -- (-0.706,0.410) -- (-0.648,0.501) -- (-0.586,0.582) -- (-0.518,0.656) -- (-0.440,0.728) -- (-0.336,0.804) -- (-0.232,0.864) -- (-0.118,0.911) -- (-0.000,0.944) -- (-0.000,1.000) -- (-0.124,0.992) -- (-0.193,0.981) -- (-0.253,0.968) -- (-0.312,0.950) -- (-0.377,0.926) -- (-0.441,0.898) -- (-0.495,0.869) -- (-0.547,0.837) -- (-0.604,0.797) -- (-0.652,0.758) -- (-0.703,0.711) -- (-0.745,0.667) -- (-0.790,0.613) -- (-0.831,0.556) -- (-0.863,0.504) -- (-0.897,0.443) -- (-0.922,0.387) -- (-0.947,0.322) -- (-0.965,0.263) -- (-0.981,0.195) -- (-0.991,0.135) -- (-0.997,0.074) -- (-1.000,0.004) -- (-0.998,-0.065) -- (-0.992,-0.126) -- (-0.982,-0.187) -- (-0.967,-0.255) -- (-0.950,-0.314) -- (-0.925,-0.379) -- (-0.900,-0.435) -- (-0.868,-0.497) -- cycle;
\fill[cT,fill opacity=0.95] (0.000,-0.001) -- (-0.866,-0.500) -- (-0.831,-0.556) -- (-0.795,-0.606) -- (-0.757,-0.654) -- (-0.709,-0.705) -- (-0.665,-0.747) -- (-0.611,-0.791) -- (-0.562,-0.827) -- (-0.503,-0.865) -- (-0.417,-0.909) -- (-0.328,-0.945) -- (-0.227,-0.974) -- (-0.133,-0.991) -- (-0.037,-0.999) -- (0.068,-0.998) -- (0.163,-0.987) -- (0.257,-0.966) -- (0.357,-0.934) -- (0.445,-0.896) -- (0.529,-0.849) -- (0.608,-0.794) -- (0.681,-0.732) -- (0.754,-0.657) -- (0.814,-0.581) -- (0.866,-0.501) -- (0.000,-0.001) -- cycle;
\fill[cT,fill opacity=0.95] (0.000,-0.001) -- (0.866,-0.500) -- (0.899,-0.439) -- (0.924,-0.383) -- (0.948,-0.318) -- (0.966,-0.259) -- (0.982,-0.191) -- (0.991,-0.131) -- (1.000,-0.009) -- (0.996,0.087) -- (0.983,0.182) -- (0.961,0.276) -- (0.927,0.375) -- (0.883,0.470) -- (0.833,0.553) -- (0.777,0.630) -- (0.712,0.702) -- (0.642,0.767) -- (0.565,0.825) -- (0.484,0.875) -- (0.389,0.921) -- (0.299,0.954) -- (0.198,0.980) -- (0.103,0.995) -- (0.007,1.000) -- (-0.000,1.000) -- (0.000,-0.001) -- cycle;
\fill[cT,fill opacity=0.95] (0.000,-0.001) -- (-0.866,-0.500) -- (-0.922,-0.387) -- (-0.944,-0.330) -- (-0.965,-0.263) -- (-0.979,-0.204) -- (-0.991,-0.135) -- (-0.998,-0.065) -- (-1.000,-0.004) -- (-0.995,0.100) -- (-0.981,0.195) -- (-0.957,0.289) -- (-0.925,0.379) -- (-0.885,0.466) -- (-0.836,0.549) -- (-0.779,0.627) -- (-0.709,0.705) -- (-0.639,0.770) -- (-0.554,0.832) -- (-0.472,0.882) -- (-0.385,0.923) -- (-0.295,0.956) -- (-0.193,0.981) -- (-0.098,0.995) -- (-0.002,1.000) -- (-0.000,1.000) -- (0.000,-0.001) -- cycle;
\draw[arc,line width=1.00pt] (1.000,0.000) -- (0.996,0.087) -- (0.982,0.191) -- (0.961,0.276) -- (0.927,0.375) -- (0.883,0.470) -- (0.829,0.560) -- (0.765,0.644) -- (0.706,0.708) -- (0.642,0.767) -- (0.558,0.830) -- (0.484,0.875) -- (0.389,0.921) -- (0.291,0.957) -- (0.206,0.979) -- (0.120,0.993) -- (0.015,1.000) -- (-0.072,0.997) -- (-0.176,0.984) -- (-0.278,0.961) -- (-0.361,0.933) -- (-0.457,0.890) -- (-0.547,0.837) -- (-0.618,0.786) -- (-0.697,0.717) -- (-0.768,0.640) -- (-0.821,0.571) -- (-0.876,0.482) -- (-0.915,0.403) -- (-0.952,0.305) -- (-0.979,0.204) -- (-0.995,0.100) -- (-1.000,0.013) -- (-0.996,-0.092) -- (-0.984,-0.178) -- (-0.960,-0.280) -- (-0.932,-0.363) -- (-0.889,-0.458) -- (-0.836,-0.549) -- (-0.774,-0.633) -- (-0.716,-0.699) -- (-0.652,-0.758) -- (-0.569,-0.822) -- (-0.495,-0.869) -- (-0.401,-0.916) -- (-0.303,-0.953) -- (-0.219,-0.976) -- (-0.116,-0.993) -- (-0.011,-1.000) -- (0.094,-0.996) -- (0.180,-0.984) -- (0.265,-0.964) -- (0.365,-0.931) -- (0.460,-0.888) -- (0.551,-0.835) -- (0.635,-0.772) -- (0.700,-0.714) -- (0.760,-0.650) -- (0.824,-0.567) -- (0.878,-0.478) -- (0.917,-0.399) -- (0.948,-0.318) -- (0.976,-0.217) -- (0.994,-0.113) -- (1.000,-0.009) -- cycle;
\draw[cE,line width=0.40pt] (-0.866,-0.500) -- (-0.817,-0.472);
\draw[cE,line width=1.00pt] (0.000,-0.001) -- (-0.866,-0.500);
\draw[cE,line width=0.40pt] (0.866,-0.500) -- (0.817,-0.472);
\draw[cE,line width=1.00pt] (0.000,-0.001) -- (0.866,-0.500);
\draw[cE,line width=0.40pt] (-0.000,1.000) -- (-0.000,0.944);
\draw[cE,line width=1.00pt] (0.000,-0.001) -- (-0.000,1.000);
\draw[cE,line width=0.40pt] (-0.816,-0.471) -- (-0.745,-0.544) -- (-0.659,-0.614) -- (-0.568,-0.672) -- (-0.465,-0.723) -- (-0.354,-0.763) -- (-0.239,-0.793) -- (-0.126,-0.810) -- (-0.005,-0.816) -- (0.110,-0.811) -- (0.229,-0.794) -- (0.345,-0.766) -- (0.456,-0.726) -- (0.555,-0.679) -- (0.652,-0.619) -- (0.739,-0.550) -- (0.815,-0.473);
\draw[cE,line width=0.40pt] (-0.816,-0.471) -- (-0.844,-0.373) -- (-0.861,-0.263) -- (-0.866,-0.156) -- (-0.858,-0.041) -- (-0.838,0.075) -- (-0.806,0.190) -- (-0.764,0.297) -- (-0.709,0.404) -- (-0.648,0.501) -- (-0.574,0.596) -- (-0.491,0.682) -- (-0.401,0.759) -- (-0.311,0.821) -- (-0.211,0.874) -- (-0.107,0.915) -- (-0.003,0.942);
\draw[cE,line width=0.40pt] (0.816,-0.471) -- (0.845,-0.367) -- (0.861,-0.263) -- (0.866,-0.156) -- (0.858,-0.041) -- (0.838,0.075) -- (0.808,0.184) -- (0.764,0.297) -- (0.709,0.404) -- (0.648,0.501) -- (0.574,0.596) -- (0.491,0.682) -- (0.401,0.759) -- (0.311,0.821) -- (0.211,0.874) -- (0.107,0.915) -- (0.003,0.942);
\fill[cV,fill opacity=1.00] (0.000,-0.001) circle (0.030);
\fill[cV,fill opacity=0.20] (-0.816,-0.471) circle (0.030);
\fill[cV,fill opacity=0.20] (0.816,-0.471) circle (0.030);
\fill[cV,fill opacity=0.20] (-0.000,0.943) circle (0.030);
\end{tikzpicture}
		\input{blowup.tex}
		\caption{The conical stratification of $D^{3}$ by a geodesic tetrahedron on the boundary (left). Its blow-up (right) is a smooth manifold with corners modelling the $3$-dimensional permutohedron, compare with figure~\ref{fig:cell}}\label{fig:stratblowup}
	\end{figure}
	
	\begin{defn}[{\cite[\S7]{ga-04}}]\label{def:resdec}
		Let $M$ be a smooth $n$-dimensional manifold with corners, and let $\mathcal{C}$ be a finite collection of locally closed, connected, smooth submanifolds (without boundary) in $M$ with $M=\bigsqcup\limits_{X\in \mathcal{C}} X$.
		The collection $\mathcal{C}$ is called a \emph{(smooth) conical stratification} of $M$ if either $n=0$, or the following hold:
		\begin{enumerate}
			\item[(C1)] There is a unique stratum of maximal dimension $n$ in $\mathcal{C}$; for each $X\in\mathcal{C}$, $\overline{X}\setminus X$ is a union of elements in $\mathcal{C}$; every boundary hypersurface of $M$ is a union of elements in $\mathcal{C}$; this equips $\mathcal{C}$ with the partial order $X<Y\Leftrightarrow X\subset \overline{Y}$;
			
			\item[(C2)] Every $\overline{X}\in\mathcal{C}$ is admissible.
			Let $l(X):=S_{N}\subseteq M$ (using intersection with a sphere of small radius) for any $X\in\mathcal{C}$.
			For $X,Y\in \mathcal{C}$, $X<Y$, let $l(X,Y):=l(X)\cap Y$.

			\item[(C3)] For all $X\in \mathcal{C}$, $x\in X$ with the respective neighbourhoods $D(X,x)$ one has:
			\begin{itemize}
				\item $\overline{Y}\cap D(X,x)=\varnothing$ for $Y\in \mathcal{C}$ unless $X\leq Y$;
				\item $\mathcal{C}(X):=\lb l(X,Y)\mid X<Y\in\mathcal{C}\rb$ is a smooth conical stratification of $l(X)$, and its product with the trivial stratification on $D_{T}$ is the induced by $\mathcal{C}$ stratification on $D(X,x)$.
			\end{itemize} 
		\end{enumerate}
	\end{defn}

	The link $l(X,Y)$ does not depend (up to a diffeomorphism) on the choice of $x\in X$ since $X\in \mathcal{C}$ is assumed to be connected.
	Notice that $l(X)$ is a semisphere (i.e. a disk) of dimension $n-\dim X-1$ for $X\neq\rint M$, because $X\subset\partial M$. 
	Therefore, definition~\ref{def:resdec} is inductive, with termination in only finitely many steps. 
	(It is not hard to show that a smooth conical stratification is a Whitney stratification.)

	For subspaces $X,Y\subseteq M$ with $X$ being admissible, the \textit{strict transform} of $Y$ with respect to $p=p_{X}\colon B_{X} M\to M$ is by definition $\widetilde Y:=\overline{p^{-1}(Y\setminus X)}\subseteq B_X M$.
	Notice that a strict transform of a connected admissible smooth submanifold may have corners, and it is neither open nor connected, in general.
	By a slight abuse of the notation we write $\widetilde Y$ also for an iterated blow-up of $Y\subset M$ (if such an object is well defined).

	Let $\mathcal{C}$ be a conical stratification of a smooth manifold $M$ with corners.
	Denote by $B_{\mathcal{C}}M$ the result of blowing up closures of all strata of codimension $\geq 2$ in $M$ in an order that does not decrease dimension.
	Choose any total order $\mathcal{C}=\lb X_{1},\dots, X_{N}\rb$ extending the partial order on $\mathcal{C}$ given by inclusion in the closure.
	Let $M_{i}$ be the iterated blow-up along the first $i$ elements of $\mathcal{C}$.		
	In order to distinguish this total order and the inclusion order, say that $\sigma=(X_{j_1}\subset\cdots\subset X_{j_k})$ is a \textit{flag} in $\mathcal{C}$.
	We record the basic properties of real blow-ups in the following lemma.

	\begin{lm}\label{lm:fullblow}
		\begin{enumerate}[label=(\roman*)]
			\item The subspace $\widetilde{\overline{X_{j}}}$ is admissible in $M_{j-1}$ for all $j=2,\dots,N$; $B_{\mathcal{C}}M$ is a smooth manifold with corners that is independent on the choice of an order on $\mathcal{C}$ up to a diffeomorphism.
			\item Assume that all elements of $\mathcal{C}$ except for the maximal stratum belong to the image of $\partial M$ in $M$.
			Then the blow-up 
			induces a homotopy equivalence of pairs $(B_{\mathcal{C}} M,\partial B_{\mathcal{C}} M)\to (M,\partial M)$.
			\item The boundary hypersurfaces, say, $F_{X}$ of $B_{\mathcal{C}}M$ are connected, and are in bijection with elements of $X\in \mathcal{C}$, $X\neq \rint M$, namely are $\widetilde{E_{X}}$ for $\dim X\leq n-2$, and $\widetilde{\overline{X}}$ for $\dim X=n-1$.
			\item A collection of pairwise distinct boundary hypersurfaces in $B_{\mathcal{C}}M$ has a nonempty intersection iff it forms a flag $X_{j_1}\subset\cdots\subset X_{j_k}$ in $\mathcal{C}$; in particular, a zero dimensional face corresponds to the flag of length $n=\dim M$, and the chart with corners containing it is $\R_{\geq 0}^{n}$.
		\end{enumerate}
	\end{lm}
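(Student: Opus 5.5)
The proof is local-to-global. Every claim in Lemma~\ref{lm:fullblow} can be checked on the local models $D(X,x)\cong D_{T}\times D_{N}$ supplied by (C2)--(C3), and the local real blow-ups are then glued. I would argue by induction on $n=\dim M$, and inside that by induction on the number $j$ of blow-ups already performed, using the inductive structure of Definition~\ref{def:resdec}: the induced stratification $\mathcal{C}(X)$ of the link $l(X)$ is a conical stratification of a manifold of dimension $n-\dim X-1<n$.

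\textbf{(i) Admissibility and independence of the order.} Blow up the closed strata in order $X_{1},X_{2},\dots$, non-decreasing in dimension. Near a point of $X_{1}$ (a minimal stratum), (C3) says $M$ is locally $D_{T}\times\cone(l(X_{1}))$ with the product stratification. The local blow-up $D_{T}\times[0,1)\times l(X_{1})$ therefore carries the product stratification of $D_{T}\times[0,1)$ with $\mathcal{C}(X_{1})$. So the strict transforms $\widetilde{\overline{X_{j}}}$ are locally $D_{T}\times[0,1)\times \overline{l(X_{1},X_{j})}$, and by the inductive hypothesis on $l(X_{1})$ these are admissible. Away from $E_{X_{1}}$ nothing changes. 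The first point that really needs care is this: after a blow-up, the strict transforms of strata form a conical stratification of $M_{1}$. The strata are the strict transforms together with the pieces $p^{-1}(X_{1})\cap\widetilde{Y}$ of the exceptional divisor, and $E_{X_{1}}$ becomes a new boundary hypersurface. I expect this to be the main technical obstacle: verifying (C1)--(C3) after a single blow-up, in particular that links in $M_{1}$ of strata lying on $E_{X_{1}}$ are products of links computed in $l(X_{1})$ with an interval.

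Once this step is established, everything iterates. For order independence, I would observe that two strata of the same dimension have disjoint closures near each of their points, since neither lies in the closure of the other and (C3) applies. Hence blow-ups along them commute: locally only one of them is present. Any two admissible total orders differ by such swaps, and the resulting spaces are canonically diffeomorphic over $M$.

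\textbf{(ii) Homotopy equivalence of pairs.} Each single blow-up map $p\colon B_{X}M\to M$ is a homotopy equivalence when $X\subset\partial M$. Locally, $D_{T}\times[0,1)\times S_{N}\to D_{T}\times D_{N}$ with $S_{N}$ a closed hemisphere-type slice (a disk), and both sides are contractible. The inclusion $M\setminus\partial M\hookrightarrow M$ is a homotopy equivalence by a collar, and $p$ restricts to a diffeomorphism off $E_{X}\subset\partial B_{X}M$. So $p$ is a homotopy equivalence, and similarly on boundaries: $\partial B_{X}M=p^{-1}(\partial M)$, locally a deformation retract situation. Composing over the iteration, and applying the five lemma to the long exact sequences or a relative Whitehead argument, gives the homotopy equivalence of pairs.

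\textbf{(iii) and (iv) Boundary hypersurfaces and flags.} For these I would compute in the local model of a depth-$n$ point. By induction the blow-up of $D_{T}\times\cone(l(X))$ is $D_{T}\times[0,1)\times B_{\mathcal{C}(X)}l(X)$. The new hypersurface $E_{X}$ is $\{t=0\}$, and it meets exactly the hypersurfaces $F_{Y}$ with $X<Y$, which inductively are those of $B_{\mathcal{C}(X)}l(X)$. Connectedness of $F_{X}=\widetilde{E_{X}}$ follows because $X$ is connected and $l(X,\cdot)$ is independent of $x$; the fibre $B_{\mathcal{C}(X)}l(X)$ is connected, being a manifold homotopy equivalent to a disk by (ii). Intersections are then products of $\{t=0\}$ with intersections in the link. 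By induction, a set of hypersurfaces meets iff it is a chain in $\mathcal{C}$, and maximal chains of length $n$ give corners $\R^{n}_{\geq0}$.
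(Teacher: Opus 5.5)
Your route differs from the paper's. The paper gets (i), (iii), (iv) almost entirely by citing Gaiffi: Prop.~9.2 (strict transforms plus exceptional-divisor strata again form a conical stratification), Prop.~9.1 (admissibility), Prop.~10.2 (order independence) and Thm.~11.1 (hypersurfaces and flags). It adds only a ``closure of a connected set'' argument for connectedness. For (ii) it uses Smale's theorem (a proper map whose fibres are points or closed balls $S_N$ is a weak equivalence), Whitehead's theorem for manifolds with corners, and a gluing lemma over collar and interior. You instead reprove everything by induction on $\dim M$ through the (C3) charts, where the iterated blow-up is $D_T\times[0,1)\times B_{\mathcal{C}(X)}l(X)$. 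This makes the proof self-contained. For the absolute half of (ii) your argument is cleaner than the paper's. Since all centres lie in the boundary, $p^{-1}(\rint M)=\rint B_{\mathcal{C}}M$ and $p$ is a diffeomorphism there. Two-out-of-three with the collar equivalences $\rint B_{\mathcal{C}}M\simeq B_{\mathcal{C}}M$ and $\rint M\simeq M$ then gives the homotopy equivalence without Smale or Whitehead. State this explicitly; the local contractibility remark plays no role.

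As written, several steps are not yet proofs.

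\emph{Admissibility in (i).} The obstacle you flag is exactly Gaiffi's Prop.~9.2. Either cite it, or bypass it in your own framework by using the charts $D(X,x)$ for \emph{every} stratum $X$, not just minimal ones. By the first bullet of (C3), only strata $Y\geq X$ meet $D(X,x)$, and they are blown up after $X$. So over $D(X,x)$ the whole sequence is $D_T\times[0,1)$ times the iterated blow-up of $l(X)$ along $\mathcal{C}(X)$ in the induced order. Admissibility of each centre at the moment it is blown up is then the inductive hypothesis for $l(X)$.

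\emph{Order independence.} (C3) separates two equidimensional strata only near points of the open strata. Their closures can still meet along a common lower stratum $Z$ (two edges at a vertex). Disjointness of the strict transforms over $Z$, once all lower strata are blown up, must come from the same induction applied to $l(Z)$, or from Gaiffi's Prop.~10.2.

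\emph{The boundary half of (ii).} This is the most substantive gap. The interior trick says nothing about $p|_{\partial}\colon\partial B_{\mathcal{C}}M\to\partial M$, and ``locally a deformation retract situation'' is not an argument. One option is the paper's: this restriction is a proper surjection whose fibres are points or the closed balls $S_N$ (balls because the centres lie in $\partial M$), hence a homotopy equivalence by Smale plus Whitehead. Another is to write an explicit fibrewise homotopy inverse to the collapse $S_N\cup_{\partial S_N}\bigl([0,1)\times\partial S_N\bigr)\to\cone(\partial S_N)$, glued to the identity away from the centre. The pair statement then follows because boundary inclusions are cofibrations, so no five lemma is needed.

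\emph{Connectedness in (iii).} Your bundle picture describes $F_X$ only over the open stratum $X$. You must add that $F_X$ is the closure of that part, which is precisely the paper's connectedness argument.
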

	\begin{proof}
		The collection $\lb \rint\widetilde{X_{j}},\dots,\rint\widetilde{X_{N}}\rb$ together with all strata of the exceptional divisors for previous blow-ups, is a conical stratification of $M_{j-1}$~\cite[Prop.~9.2]{ga-04}.
		Then $\widetilde{\overline{X_{j}}}$ is admissible by~\cite[Prop.~9.1]{ga-04}, and the blow-up along $\widetilde{X_{j}}$ is well defined.
		The claim on uniqueness is shown in~\cite[Prop.~10.2]{ga-04}.
		This proves~$(i)$.
		
		In order to prove $(ii)$ consider the blow-up $p\colon B_{\widetilde{\overline{X_{j}}}} M_{j-1}\to M_{j-1}$.
		The blow-up map $p$ is proper, and its fibers are contractible (being single points, or links, i.e. closed balls because of having at least one corner).
		Then Smale's theorem~\cite{sm-57} implies that $p$ is a weak equivalence.
		The similar claim holds for the restriction of $p$ to the collar (for the existence of which see~\cite[Thm.~1]{do-61}).
		Since every smooth manifold with corners has the homotopy type of a CW complex~\cite{jo-83}, by Whitehead theorem such a map is a homotopy equivalence.
		Applying the gluing lemma~\cite[7.4.1]{br-68} to the restrictions of $p$ to the collar and to the interior (which agree on the intersection), one obtains that the blow-up map $p$ is a homotopy equivalence.
		This proves $(ii)$.	
		
		By minimality, a stratum $Y$ preceding $X$ in the total order on $\mathcal{C}$ has the strict transform with an empty intersection with $X$; hence, $\widetilde{\overline{X}}=\overline{p_{\overline{Y}}^{-1}(X)}$ is connected as a closure of a connected set.
		Then the strict transforms of elements in $\mathcal{C}$ and of the exceptional divisors (which are connected by the definition) are connected.
		Then the claims $(iii)$, $(iv)$ follow directly from~\cite[Thm.~11.1]{ga-04} (taking the maximal building set).
	\end{proof}
		
	A manifold $M$ with corners is called a \textit{manifold with faces} if any point $x\in M$ lies in exactly $\depth_{M} x$ distinct components of $\partial M$ as defined in~\cite[Def.~2.6]{jo-12}. 
	A manifold with faces $M$ is called \emph{nice} if any face submanifold $F\subset M$ of depth $k$ is an intersection of exactly $k$ boundary hypersurfaces in $M$~\cite{ma-pa-06}.

	\begin{thm}\label{thm:intcorn}
		Let $M$ be any smooth manifold with corners admitting a smooth conical stratification $\mathcal{C}$ such that all elements of $\mathcal{C}$ except for the maximal stratum lie in $\partial M$.
		Then $M$ is homotopy equivalent (compatible with the boundary) to a smooth nice manifold $N:=B_{\mathcal{C}} M$ with faces.
		Suppose that for all $X\subset Y\in \mathcal{C}$ and all $Z\in\mathcal{C}$ (except possibly the maximal stratum) $\overline{l(X,Y)}$ and $\overline{Z}$ are contractible. 
		Then $N$ has only contractible proper (closed) faces.
	\end{thm}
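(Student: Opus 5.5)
The first part will follow from Lemma~\ref{lm:fullblow}. Put $N:=B_{\mathcal{C}}M$. By Lemma~\ref{lm:fullblow}(i) this is a smooth manifold with corners, well defined up to diffeomorphism. Since every non-maximal stratum lies in $\partial M$, Lemma~\ref{lm:fullblow}(ii) shows that the blow-down $p\colon(N,\partial N)\to(M,\partial M)$ is a homotopy equivalence of pairs. To see that $N$ is a nice manifold with faces, I will use Lemma~\ref{lm:fullblow}(iii),(iv). The boundary hypersurfaces $F_{X}$ are connected and indexed by the strata $X\neq\rint M$. A family of them meets if and only if it forms a flag. Near a point of depth $k$, the chart with corners is locally $\R^{k}_{\geq0}\times\R^{n-k}$, and the $k$ coordinate hyperplanes come from $k$ distinct $F_{X}$'s along a flag of length $k$. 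Hence each point of depth $k$ lies in exactly $k$ distinct hypersurfaces, and no hypersurface meets itself, because each $F_{X}$ is a connected embedded strict transform or exceptional divisor. A face of depth $k$ is then a connected component of $F_{X_{1}}\cap\dots\cap F_{X_{k}}$. I will argue that this intersection is connected, so that $N$ is nice. The identification of faces below gives this.

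For contractibility, I will describe every face as an iterated product. Take a flag $\sigma=(X_{1}<\dots<X_{k})$. I expect the closed face $F_{\sigma}=F_{X_{1}}\cap\dots\cap F_{X_{k}}$ to be diffeomorphic to
\[
B\bigl(\overline{X_{1}}\bigr)\times B\bigl(\overline{l(X_{1},X_{2})}\bigr)\times\dots\times B\bigl(\overline{l(X_{k-1},X_{k})}\bigr)\times B\bigl(\overline{l(X_{k})}\bigr),
\]
with two conventions. Here $B(\cdot)$ denotes the full blow-up of the closed stratum or link with respect to its induced conical stratification. The last factor, $l(X_{k})$, is the link of $X_{k}$ in $M$ itself, i.e. $\overline{l(X_{k},\rint M)}$.

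The reason is the standard structure of the exceptional divisor. By Definition~\ref{def:resdec}(C3), near $X$ the manifold is locally $D_{T}\times\cone(l(X))$ with the product stratification. After blowing up, $E_{X}$ fibres over the strict transform of $\overline{X}$ with fibre the blow-up of $l(X)$ along $\mathcal{C}(X)$. Because these links are disks, hence contractible, this fibration is trivial over the contractible base. Intersecting further with $F_{Y}$ for $Y>X$ restricts to the face of the fibre $B(l(X))$ labelled by $l(X,Y)$. Induction on the length of the flag, using Definition~\ref{def:resdec} inductively on links, gives the product formula. It also shows that each such intersection is connected, which yields niceness.

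Each factor is a blow-up of a contractible space, $\overline{Z}$ or $\overline{l(X,Y)}$, along strata in its boundary. By Lemma~\ref{lm:fullblow}(ii), applied to these lower-dimensional manifolds with corners with their induced conical stratifications, each factor is homotopy equivalent to the original space, hence contractible. So every proper face $F_{\sigma}$ is a product of contractible spaces and is contractible.

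I expect the main obstacle to be making the product or fibre-bundle description of $F_{\sigma}$ rigorous and globally valid, not just local. In particular, one has to check that the strict transform of $\overline{X}$ inside later blow-ups is itself the full blow-up $B(\overline{X})$ of its own induced stratification. One must also check that the induced stratifications on $\overline{Z}$ and on the links satisfy the hypothesis of Lemma~\ref{lm:fullblow}(ii), namely that their lower strata lie in the boundary. For the first point I would try Gaiffi's description of intersections of boundary hypersurfaces~\cite[Thm.~11.1]{ga-04} and reduce to it. If global triviality of the fibration is problematic, contractibility of base and fibre already suffices via the long exact sequence in homotopy together with the Whitehead theorem, since faces have the homotopy type of CW complexes~\cite{jo-83}.
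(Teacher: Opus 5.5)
Your proposal is correct and is essentially the paper's argument. Niceness comes from Lemma~\ref{lm:fullblow}(iii),(iv) exactly as you describe, and contractibility comes from fibering $F_\sigma$ over the blown-up closure $\widetilde{\overline{X_1}}$ of the minimal stratum, with fibre the face $F_\tau$ of $B(l(X_1))$; the paper does this by induction on $\dim M$ together with Smale's theorem, instead of unrolling it into your iterated product. The global triviality behind your product formula is not needed, since your fallback via the homotopy exact sequence is what the paper effectively does, and where triviality holds it comes from the base being contractible and paracompact, not from the fibres being disks.
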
	
	\begin{proof}
		The claim on homotopy equivalence is lemma~\ref{lm:fullblow} $(ii)$.
		By lemma~\ref{lm:fullblow} $(iv)$, a codimension $k$ intersection corresponds to a flag of length $k$ non-maximal strata in $\mathcal{C}$, each corresponding to a boundary hypersurface.
		(In particular, the restriction of the boundary map $\iota$ is injective, which corresponds to no self-intersections occurring.)
		This implies the claim on the ``nice'' property, by looking at the chart with corners around points from the relative interior of such a face.
		At a point $x\in B_{\mathcal{C}} M$ of depth $k$ the coordinate subspaces in the local chart of the corner $\partial B_{\mathcal C}M$ correspond by lemma~\ref{lm:fullblow} $(iv)$ to a flag $X_{j_1}\subset\dots\subset X_{j_k}$; its members have pairwise distinct dimensions, hence are pairwise distinct strata, and by lemma~\ref{lm:fullblow} $(iii)$ they index pairwise distinct facets. 
		Hence, $x$ lies in exactly $k$ distinct facets of $B_{\mathcal{C}} M$.
		
		We prove contractibility of faces by the induction on $n=\dim M$.
		For $n=0$ there is nothing to prove.
		Assuming the statement is proved in dimensions $<n$, consider a face $F$ of $B_{\mathcal{C}} M$.
		By lemma~\ref{lm:fullblow}, $F=F_{\sigma}$ corresponds to a flag $\sigma$ in $\mathcal{C}$.
		Let $X=\min \sigma$.		
		The face $E_{X}$ appears after blowing up $X$, and $F_{\sigma}$ is its strict transform in $B_{\mathcal{C}} M$.
		The blow-ups less or incomparable do not affect the lift steps, and other blow-ups modify this face by blowing the respective link factor (by $(C3)$ and \cite[Prop.~9.2]{ga-04}).
		This implies that the restriction of the blow-up map $F_{\sigma}\to \widetilde{\overline{X}}$ has fibers homeomorphic to $F_{\tau}$,
		where $\tau:=( l(X,Y)\mid Y\in\sigma,\ Y>X )$ is a flag in the spherical slice $l(X)$ stratified by $\mathcal{C}(X)$.
		Then by the above, lemma~\ref{lm:fullblow} $(ii)$ and the inductive assumption (applied to $\dim X<n$), the base $\widetilde{\overline{X}}\cong \overline{X}$ is contractible.
		The induction assumption applies to the conical stratification $\mathcal{C}(X)$ of $l(X)$ of dimension $n-\dim X-1$, therefore the fiber is also contractible. 
		This implies the claim on $F_{\sigma}$ by application of Smale's theorem, similar to the above.
	\end{proof}

	\begin{lm}\label{lm:poinc}
		Any contractible compact smooth manifold $M$ with corners of dimension $n\leq 3$ is homeomorphic to a ball.
	\end{lm}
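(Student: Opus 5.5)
The plan is to reduce to the case of a smooth compact manifold with boundary, and then dispatch each dimension with the classical classification results.

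\textbf{Step 1: smooth the corners.} A compact smooth manifold $M$ with corners is homeomorphic to a compact topological manifold with boundary; in fact, by the standard straightening-of-corners procedure, to a smooth compact manifold $M'$ with boundary. Locally, a chart $\R^{j}_{\geq 0}\times\R^{n-j}$ is homeomorphic to $\R_{\geq 0}\times\R^{n-1}$, and these local homeomorphisms can be chosen to be smooth away from the corner strata. Gluing them with a collar of the boundary, which exists for manifolds with corners by \cite[Thm.~1]{do-61}, gives a smooth structure with boundary on the same topological space. The interior is unchanged and $\partial M'$ is the topological boundary. Homeomorphism type and contractibility are preserved, so it suffices to treat a contractible compact smooth $n$-manifold $M'$ with boundary, $n\leq 3$.

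\textbf{Step 2: identify the boundary.} $\partial M'$ is a closed $(n-1)$-manifold. By Poincar\'e--Lefschetz duality and contractibility,
\[
H_{i}(M',\partial M';\Z)\cong H^{n-i}(M';\Z),
\]
so the long exact sequence of the pair shows that $\partial M'$ is a $\Z$-homology $(n-1)$-sphere. In particular it is nonempty (for $n\geq 1$) and connected.

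\textbf{Step 3: case analysis by dimension.}
\begin{itemize}
\item $n=0$: $M'$ is a point.
\item $n=1$: $M'$ is a compact interval.
\item $n=2$: $\partial M'=S^{1}$. By the classification of surfaces, a compact contractible surface with one boundary circle is the disk.
\item $n=3$: $\partial M'$ is a closed orientable surface with $H_{1}=0$, hence $S^{2}$. Cap it off with a $3$-ball to get the closed $3$-manifold $\widehat M=M'\cup_{S^{2}}D^{3}$. By Seifert--Van Kampen, $\widehat M$ is simply connected, so by the Poincar\'e conjecture (Perelman) $\widehat M\cong S^{3}$. By Alexander's theorem (the smooth or PL Schoenflies theorem in dimension $3$), the embedded $S^{2}$ bounds balls on both sides, so $M'\cong D^{3}$.
\end{itemize}

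\textbf{Main obstacle.} The only delicate point is Step~1: we need a precise reference or construction showing that corners can be straightened so that $M$ becomes a topological manifold with boundary. For the $n=3$ argument we also need this boundary to be tamely (smoothly or PL) embedded, so that the Schoenflies theorem applies. I would first try the smoothing of corners via collars and the cited result of Douady, which produces a smooth manifold with boundary homeomorphic to $M$. As a fallback, the topological-manifold-with-boundary statement suffices together with Moise's theorem \cite{mo-51}: a compact $3$-manifold with boundary can be triangulated, and Alexander's theorem then applies in the PL category.
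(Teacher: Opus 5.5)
Your proposal is correct and follows the same route as the paper: straighten the corners (the paper also just invokes Douady's smoothing), use Poincar\'e--Lefschetz duality for the contractible, hence orientable, $M$ to see that $\partial M$ is a homology $(n-1)$-sphere, finish $n\leq 2$ by the classification of low-dimensional manifolds, and apply Perelman for $n=3$. There the paper simply cites a corollary, while you spell out the capping-off and Schoenflies step, which also works topologically via Brown's generalized Schoenflies theorem because $\partial M$ is bicollared in $\widehat M$. The only slip is cosmetic: for $n=1$ the boundary is two points rather than connected, which is harmless since you treat $n=1$ separately.
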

	\begin{proof}
		By smoothening~\cite{do-61}, we may assume that $M$ is a topological manifold.		
		Furthermore, $M$ is contractible, thus orientable.
		By the long exact sequence of $(M,\partial M)$ and the Poincar\'e--Lefschetz duality, 
		\[
			\widetilde{H}_{i}(\partial M;\Z)\cong 
			H_{i+1}(M,\partial M;\Z)\cong
			H^{n-1-i}(M;\Z),\ i\leq n-1.
		\]
		This group vanishes for $i\leq n-2$.
		This implies the claim for $n\leq 2$ by the elementary classification results.
		For $n=3$, $\partial M$ is simply connected.
		Then $M$ is homeomorphic to $D^{3}$ by Perelman, e.g. see~\cite[Cor.~1.2]{be-16}.
	\end{proof}

	\subsection{Conical stratifications of Brieskorn manifolds}

	\begin{figure}
		\centering
		\begin{tikzpicture}
	\draw[thick] (3.40,0.90) -- (4.70,1.65) -- (4.70,3.15)
	-- (3.40,3.90) -- (2.10,3.15) -- (2.10,1.65) -- cycle;
		
	\draw[blue,thick] (4.70,1.65) -- (4.70,3.15); 
	\node[] at (5.05,2.4) {$D_2$};
	\draw[blue,thick] (3.40,3.90) -- (2.10,3.15); 
	\node[] at (2.4,3.65) {$D_3$};
	\draw[blue,thick] (2.10,1.65) -- (3.40,0.90); 
	\node[] at (2.4,1.15) {$D_1$};
	\node at (3.5,2.5) {$H^{+}$};
\end{tikzpicture}
		\caption{The hexagon $H^{+}$ has edges common with the $2$-disks $D_{i}$ around orbifold vertices $v_{i}$, $i=1,2,3$\label{fig:hex}}
	\end{figure}			

	Let $\Sigma(p,q,r)$ be any Brieskorn homology sphere with pairwise coprime $p,q,r>1$.
	The manifold $\Sigma(p,q,r)$ has the structure of a locally trivial $S^{1}$-bundle (a Seifert fibration) over the orbifold $2$-sphere $S$ with $3$ cone points $v_{1},v_{2},v_{3}$ of orders $p,q,r$, respectively.
	The base $S$ has a hyperbolic, Euclidean or spherical metric, depending on $p,q,r$ values.
	One identifies $S$ with two copies $T^{+}$, $T^{-}$ of a triangle with geodesic edges and angles $\pi/n$, $n=p,q,r$, glued along boundaries.
	The Seifert invariants~\cite{je-ne-83} of the fibration on $\Sigma(p,q,r)$ are given by pairs $(\alpha_{i},\beta_{i})$ of coprime numbers, $i=1,2,3$, with the respective Euler number $e=\pm 1/pqr$ and parameter $b=-1$, i.e. satisfying the equation
	\[
		e=
		-(b+\sum_{i=1}^{3}\frac{\beta_{i}}{\alpha_{i}}).
	\]	

	\begin{lm}\label{lm:resdecsig}
		There exists a smooth conical stratification by the open cells in a finite regular CW complex on $\Sigma(p,q,r)\times [0,1)$ satisfying all conditions of theorem~\ref{thm:intcorn}.
	\end{lm}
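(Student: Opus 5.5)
The stratification will be built from the Seifert fibration $\pi\colon\Sigma(p,q,r)\to S$ together with the decomposition of the orbifold base $S$ suggested by figure~\ref{fig:hex}. First I decompose $S$ into a regular CW complex whose cells are smooth in the orbifold sense. Around each cone point $v_i$ take a small geodesic disk $D_i$. The complement of $D_1\cup D_2\cup D_3$ is a pair of hexagons $H^{+}\subset T^{+}$ and $H^{-}\subset T^{-}$, whose edges alternate between arcs of $\partial D_i$ and geodesic segments of the triangle edges. Each $D_i$ is further cut along these segments into two half-disks, so that every cell is an embedded disk.

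Next I lift this decomposition to $\Sigma=\Sigma(p,q,r)$. Over each $D_i$ the preimage $\pi^{-1}(D_i)$ is a fibered solid torus, and over $H^{\pm}$ the bundle is trivial. Choosing sections, with the Seifert invariants $(\alpha_i,\beta_i)$ prescribing how they glue, and cutting each circle fiber at two points, I obtain a regular CW decomposition of $\Sigma$ whose $3$-cells are (cell)$\times$(arc). Each such $3$-cell is a ball, each lower cell is a disk, and the lower cells are realized by smooth submanifolds meeting transversally or at controlled angles, exactly as in the tetrahedron model of figure~\ref{fig:stratblowup}. The exceptional fibers over $v_i$ become $1$-skeleton circles, which are subdivided into arcs.

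On $M=\Sigma\times[0,1)$ the strata are the open cells $e\times\{0\}$ together with the single open stratum $\Sigma\times(0,1)$; this is the maximal stratum, and every other stratum lies in $\partial M$. For the local conditions (C2)--(C3) I require that, in suitable charts near each cell, the decomposition is a product of a disk with a cone on the link. The link $l(X)$ is then a half-sphere of dimension $3-\dim X$, stratified by the cone on the link of $e$ in the CW complex. Since the complex is regular and locally looks like the geodesic decomposition of a $2$- or $3$-disk, these links are disks stratified by regular cells, and both $\overline{l(X,Y)}$ and the closures $\overline{Z}$ are closed cells, hence contractible. This verifies the hypotheses of theorem~\ref{thm:intcorn}.

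The main obstacle is smoothness, that is, making the decomposition conical in the smooth sense and not just PL. This is precisely the crease problem described for figure~\ref{fig:cell}. To handle it, I will first choose the decomposition of $S$ so that near every vertex the edges are geodesic rays; this is possible because $S$ carries a constant-curvature metric and $H^{\pm}$, $D_i$ are geodesic polygons or disks. Then I pull back via the locally free circle action, using the orbifold charts $D^2\times S^1\to\pi^{-1}(D_i)$, which are $\Z/\alpha_i$-quotients. In these charts the strata are products of geodesic cones with circle arcs and are invariant under the linearization. Hence after a local diffeomorphism each vertex neighborhood is a linear cone, and (C3) follows. Where the cells meet at vertices lying on exceptional fibers, I will check directly, in the $\Z/\alpha_i$-covering chart, that the induced stratification of the $2$-sphere link is a smooth geodesic decomposition.
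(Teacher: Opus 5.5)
\textbf{There is a genuine gap at the exceptional fibres.} This is exactly where you defer the verification to a ``direct check'', and that check fails.

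Write $\pi^{-1}(D_i)=(S^1\times D^2)/(\Z/\alpha_i\Z)$, with the generator acting by $(\psi,z)\mapsto(\psi+\beta_i/\alpha_i,\zeta z)$, where $\psi\in\R/\Z$ and $\zeta=e^{2\pi i/\alpha_i}$. The exceptional fibre is then $S^1\times\{0\}$ modulo $\psi\sim\psi+1/\alpha_i$. A half-disk $D_i\cap T^{\pm}$ lifts to a sector $\Omega_0\subset D^2$ of angle $\pi/\alpha_i$, and $S^1\times\overline{\Omega_0}\to\Sigma(p,q,r)$ is injective except over the apex. Two disjoint sections over $\Omega_0$ cut $S^1\times\{0\}$ into two arcs, one of which has length at least $1/2\geq 1/\alpha_i$. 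So the corresponding closed $3$-cell (and likewise a $2$-cell over a segment ending at $v_i$) wraps around the whole exceptional fibre. Consequently:
\begin{itemize}
\item it is not an embedded ball;
\item its closure is homotopy equivalent to $S^1$;
\item at suitable points of the exceptional fibre the same $3$-cell $Y$ fills several of the $2\alpha_i$ local sectors, so $l(X,Y)$ is disconnected and (C3) fails.
\end{itemize}

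Independently, sections over the two half-disks cannot agree along both segments through $v_i$. If they did, they would glue to a section of $\pi^{-1}(D_i)\to D_i$. No such section exists for $\alpha_i>1$ and $\gcd(\alpha_i,\beta_i)=1$: an equivariant lift has degree $\equiv\beta_i\not\equiv 0\bmod\alpha_i$ on small circles around the apex, so it cannot converge to a point of the core. Hence your product cells over adjacent half-disks do not fit together. Any consistent repair forces the lifted sections to assemble into meridian disks, and you need at least two of them (so at least $2\alpha_i$ cuts of a regular fibre) to avoid the wrapping. As described, your complex satisfies neither regularity nor the contractibility hypotheses of theorem~\ref{thm:intcorn}.

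\textbf{The missing idea.} Keep $v_i$ out of the $0$-skeleton of the base, and cut by meridian disks rather than by sections. The paper treats each $D_i$ as a $2$-gon with both vertices on $\partial D_i$, so $S$ gets $6$ vertices, $9$ edges and $5$ faces. It then cuts each of the five solid tori over $H^{\pm},D_1,D_2,D_3$ by two generic meridian disks into two ``equatorial cylinders''. Over $D_i$ such a disk meets the core once and a regular fibre $\alpha_i$ times, so both pieces are balls and the core is split into two arcs.

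The cells are then no longer products, and the work moves to the tori over $\partial D_i$ and $\partial H^{\pm}$. These tori carry fibre circles, meridian circles of slope $\beta/\alpha$, and meridian arcs coming from the neighbouring solid tori. One checks two things:
\begin{itemize}
\item since $\alpha_i\geq 2$, each family has at least two arcs, so the curves cut every torus into embedded polygons;
\item by genericity, exactly four cylinders meet at every vertex, so each vertex link is $\partial\Delta^{3}$.
\end{itemize}
Conicality and contractibility of all $\overline{Z}$ and $\overline{l(X,Y)}$ then follow from the case analysis for $\dim X=2,1,0$.

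Your linearization argument via geodesic cones times arcs covers only product cells. With meridian-disk cells, the vertices where a fibre circle meets a meridian arc, or where two meridian curves cross, are handled instead by transversality and the four-cylinder count.
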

	\begin{proof}
	We work with the boundary of the manifold $M:=\Sigma(p,q,r)\times [0,1)$.
	Cutting small disks $D_{i}$, around (common) vertices $v_{i}$, $i=1,2,3$, of triangles in $S$ we obtain two hexagons $H^{+}\subset T^{+}$, $H^{-}\subset T^{-}$ in the complement, see figure~\ref{fig:hex}.
	We consider each of these $2$-disks $D_{i}$ as a $2$-gon with two vertices and two $1$-dimensional faces at the boundary.
	Such a complex has $6$ vertices, $9$ edges and $5$ (two-dimensional) disks.	
	The Seifert fibration over either of such disks is a solid torus.
	Each of the above solid tori with the invariant, say $(\alpha,\beta)$, is the quotient of a standard solid torus by the free $\Z/\alpha\Z$-action given for the cyclic group generator by
	\[
		(r\exp(2\pi i\phi),\exp(2\pi 	i\psi))\mapsto (r\exp(2\pi i(\phi+1/\alpha)),\exp(2\pi i(\psi+\beta/\alpha))).
	\]
	This is nothing other than the slice $S^{1}\times_{\Z/\alpha\Z} D^{2}$ of the $\Z/\alpha\Z$-action.
	This chart gives rise to the meridian disks $\lb\phi=\const\rb$.
	The boundary of a meridian disk is a circle in the boundary of a two-dimensional torus with the slope $\beta/\alpha$.
	Each torus $T^{2}$ (over the boundary circle of a $2$-disk in $S$) bounds exactly one solid torus, and has a nonempty intersection with $2$ or $4$ other solid tori (preimages of other disks in $S$).
	We subdivide each of the respective solid tori into two equatorial cylinders by meridian circles with generic parameters.
	Each torus $T^{2}$ has the fiber circle (preimage of a vertex in the complex on $S$) and meridian circles or arcs of several slopes.
	Every torus has exactly one family of two meridian circles, and $2$ or $4$ families of arcs lying on two other meridian circles (created by intersection with the remaining incident solid tori).
	Each part of the decomposition on such a torus has no self-intersections.
	Indeed, each torus is cut along its fiber circles; on each resulting annulus there are one or two families of transverse parallel arcs.
	In the first case, the slope of the arc is $0$, and there are two disjoint arcs of such kind, resulting into embedded $4$-gons, so the statement is clear.
	In the second case, since there are at least two arcs in each family (following from $\lb\alpha_{1},\alpha_{2},\alpha_{3}\rb=\lb p,q,r\rb$ and $p,q,r>1$), they cut the annulus into disjoint embedded polygons, proving the claim. 
	\begin{figure}
		\centering
\begin{tikzpicture}[scale=5]
\useasboundingbox (-0.14,-0.16) rectangle (1.14,1.16);

\draw[red,thin] (0.0000,0.0727) -- (0.1667,0.3227);
\draw[red,thin] (0.0000,0.5727) -- (0.1667,0.8227);
\draw[orange,thin] (0.0000,0.4051) -- (0.1667,0.6551);
\draw[orange,thin] (0.0000,0.9051) -- (0.0633,1.0000);
\draw[orange,thin] (0.0633,0.0000) -- (0.1667,0.1551);
\draw[blue,thin] (0.3333,0.8087) -- (0.5000,0.9087);
\draw[blue,thin] (0.3333,0.0087) -- (0.5000,0.1087);
\draw[blue,thin] (0.3333,0.2087) -- (0.5000,0.3087);
\draw[blue,thin] (0.3333,0.4087) -- (0.5000,0.5087);
\draw[blue,thin] (0.3333,0.6087) -- (0.5000,0.7087);
\draw[cyan,thin] (0.3333,0.2572) -- (0.5000,0.3572);
\draw[cyan,thin] (0.3333,0.4572) -- (0.5000,0.5572);
\draw[cyan,thin] (0.3333,0.6572) -- (0.5000,0.7572);
\draw[cyan,thin] (0.3333,0.8572) -- (0.5000,0.9572);
\draw[cyan,thin] (0.3333,0.0572) -- (0.5000,0.1572);
\draw[green!50!black,thin] (0.6667,0.3474) -- (0.8333,0.4902);
\draw[green!50!black,thin] (0.6667,0.4902) -- (0.8333,0.6331);
\draw[green!50!black,thin] (0.6667,0.6331) -- (0.8333,0.7759);
\draw[green!50!black,thin] (0.6667,0.7759) -- (0.8333,0.9188);
\draw[green!50!black,thin] (0.6667,0.9188) -- (0.7614,1.0000);
\draw[green!50!black,thin] (0.7614,0.0000) -- (0.8333,0.0616);
\draw[green!50!black,thin] (0.6667,0.0616) -- (0.8333,0.2045);
\draw[green!50!black,thin] (0.6667,0.2045) -- (0.8333,0.3474);
\draw[olive,thin] (0.6667,0.2386) -- (0.8333,0.3815);
\draw[olive,thin] (0.6667,0.3815) -- (0.8333,0.5243);
\draw[olive,thin] (0.6667,0.5243) -- (0.8333,0.6672);
\draw[olive,thin] (0.6667,0.6672) -- (0.8333,0.8101);
\draw[olive,thin] (0.6667,0.8101) -- (0.8333,0.9529);
\draw[olive,thin] (0.6667,0.9529) -- (0.7216,1.0000);
\draw[olive,thin] (0.7216,0.0000) -- (0.8333,0.0958);
\draw[olive,thin] (0.6667,0.0958) -- (0.8333,0.2386);

\draw[pink,thick] (0.1667,0.6859) -- (0.3333,0.6859);
\draw[pink,thick] (0.1667,0.1859) -- (0.3333,0.1859);
\draw[pink,thick] (0.5000,0.6859) -- (0.6667,0.6859);
\draw[pink,thick] (0.5000,0.1859) -- (0.6667,0.1859);
\draw[pink,thick] (0.8333,0.6859) -- (1.0000,0.6859);
\draw[pink,thick] (0.8333,0.1859) -- (1.0000,0.1859);

\draw[thick] (0.0000,0.4359) -- (1.0000,0.4359);
\draw[thick] (0.0000,0.9359) -- (1.0000,0.9359);

\foreach \x in {0,0.1667,0.3333,0.5,0.6667,0.8333}{\draw[very thick] (\x,0) -- (\x,1);}
\draw[very thick] (0,0) rectangle (1,1);

\draw[->,thick] (0.45,1.03) -- (0.55,1.03);
\draw[->,thick] (0.45,-0.03) -- (0.55,-0.03);
\draw[->,thick] (1.03,0.45) -- (1.03,0.55);
\draw[->,thick] (-0.03,0.45) -- (-0.03,0.55);

\node at (0.0833,-0.085) {$D_1\,(2,1)$};
\node at (0.2500,1.085) {$H^{-}$};
\node at (0.4167,-0.085) {$D_2\,(5,1)$};
\node at (0.5833,1.085) {$H^{-}$};
\node at (0.7500,-0.085) {$D_3\,(7,2)$};
\node at (0.9167,1.085) {$H^{-}$};

\end{tikzpicture}
\begin{tikzpicture}[scale=5]
\useasboundingbox (-0.14,-0.16) rectangle (1.14,1.16);

\draw[red,thin] (0.0000,0.6312) -- (1.0000,0.9169);
\draw[red,thin] (0.0000,0.7740) -- (0.7909,1.0000);
\draw[red,thin] (0.7909,0.0000) -- (1.0000,0.0597);
\draw[red,thin] (0.0000,0.9169) -- (0.2909,1.0000);
\draw[red,thin] (0.2909,0.0000) -- (1.0000,0.2026);
\draw[red,thin] (0.0000,0.0597) -- (1.0000,0.3455);
\draw[red,thin] (0.0000,0.2026) -- (1.0000,0.4883);
\draw[red,thin] (0.0000,0.3455) -- (1.0000,0.6312);
\draw[red,thin] (0.0000,0.4883) -- (1.0000,0.7740);
\draw[blue,thin] (0.0000,0.6546) -- (1.0000,0.9404);
\draw[blue,thin] (0.0000,0.7975) -- (0.7087,1.0000);
\draw[blue,thin] (0.7087,0.0000) -- (1.0000,0.0832);
\draw[blue,thin] (0.0000,0.9404) -- (0.2087,1.0000);
\draw[blue,thin] (0.2087,0.0000) -- (1.0000,0.2261);
\draw[blue,thin] (0.0000,0.0832) -- (1.0000,0.3689);
\draw[blue,thin] (0.0000,0.2261) -- (1.0000,0.5118);
\draw[blue,thin] (0.0000,0.3689) -- (1.0000,0.6546);
\draw[blue,thin] (0.0000,0.5118) -- (1.0000,0.7975);

\draw[thick] (0.0000,0.9666) -- (0.5000,0.9666);
\draw[thick] (0.0000,0.4666) -- (0.5000,0.4666);
\draw[thick] (0.5000,0.8211) -- (1.0000,0.8211);
\draw[thick] (0.5000,0.3211) -- (1.0000,0.3211);

\draw[very thick] (0,0) -- (0,1);
\draw[very thick] (0.5,0) -- (0.5,1);
\draw[very thick] (0,0) rectangle (1,1);

\draw[->,thick] (0.45,1.03) -- (0.55,1.03);
\draw[->,thick] (0.45,-0.03) -- (0.55,-0.03);
\draw[->,thick] (1.03,0.45) -- (1.03,0.55);
\draw[->,thick] (-0.03,0.45) -- (-0.03,0.55);

\node at (0.25,-0.085) {$H^{+}$};
\node at (0.75,1.085) {$H^{-}$};

\end{tikzpicture}		
		\caption{The $1$-dimensional faces on the tori for $H^{+}$ (left) and $D_{3}$ (around the order $7$ vertex $v_{3}$, right) for $\Sigma(2,5,7)$.
		The vertical circles (including the boundary) correspond to fibers over vertices. 
		The sloped or horizontal segments (not including the boundary) correspond to meridian boundaries or arcs obtained by intersection with other solid tori.
		The meridian arcs have slopes $\beta/2\alpha$ (left, due to only one half of $\partial D_{i}$ intersecting $H^{+}$) and $\beta/\alpha$ (right), $\alpha$ times per fiber circle \label{fig:tor7}}
	\end{figure}	
	All intersections of $1$-dimensional strata are transversal by genericity.
	The vertices of the obtained complex belong to some $T^{2}$, and are of the following two types (which we call $(X)$- or $(T)$-vertices, depending on the shape of the respective $1$-dimensional faces meeting at it):
	\begin{enumerate}
		\item[$(X)$] an intersection of a meridian disk and a fiber circle (both for the same solid torus);
		an intersection of a meridian circle with a meridian arc;	
		\item[$(T)$] an intersection of the fiber circle in one solid torus, and an arc of a meridian disk of another solid torus.
	\end{enumerate}
	The obtained complex is simple, i.e. at every its vertex meet exactly $4$ cylinders.
	Indeed, at a $(T)$-vertex meet two distinct cylinders of $H^{\pm}$, a cylinder from $H^{\mp}$ and a cylinder from the vertex $v_{i}$ neighborhood.
	At an $(X)$-vertex meet a pair of cylinders subdividing one solid torus, and a cylinder from a pair of another disjoint solid tori.
	The condition $(C1)$ from definition~\ref{def:resdec} holds, since there is a unique maximal stratum in $\mathcal{C}$, the closure of any stratum is a union of strata of lower dimensions, and $\partial M$ is a union of strata.
	One can choose the local tubular neighborhoods $D(X,x)$ (shrinking radii near $\overline{X}\setminus X$ if necessary) satisfying $(C2)$ and first bullet of $(C3)$.
	The slice $D_N$ at $X$ is a half-open half-ball of dimension $4-d$, $d=d(X)=\dim X$; its link $l(X)\cong(D_N\smallsetminus0)/\mathbb R_{>0}$ is a closed $(3-d)$-ball, whose interior is the stratum $l(X,\rint M)$ and whose boundary $\partial l(X)\cong S^{2-d}$ carries the remaining strata of $\mathcal C(X)$, described below.
	The strata of $\mathcal C(X)$ are $l(X,\rint M)=\rint l(X)$ together with the links of the cells incident to $X$, which belong to $\partial l(X)$.
	We check the second bullet condition of $(C3)$, i.e. conicality, at points of a stratum $X$ as follows:
	\begin{itemize}
		\item[$d(X)=2$:] the normal bundle of $X\subset M$ has rank $2$, $l(X)$ is an arc, and $\partial l(X)$ consists of two points (links of the incident $3$-cells).
		Then $\mathcal{C}(X)$ consists of $\rint l(X)$ and of these two points, i.e. is conical;
		\item[$d(X)=1$:] along an edge take the tube $X\times D^{3}$ in $M$ in which the incident faces are products of $X$ with a ray from the origin, or a sector.
		Then $l(X)$ is a disc and $\partial l(X)\cong S^1$ is stratified by the links of the incident 2-cells (points) and 3-cells (arcs); together with $\rint l(X)$ this is conical by the previous item;
		\item[$d(X)=0$:] here, $l(X)\cong D^{3}$. 
		By the above, exactly four cylinders meet at the vertex $X$, and the induced stratification of $S^{2}$ is isomorphic to that of $\partial\Delta^3$: four triangles, six arcs, four points, the links of the incident $3$-, $2$- and $1$-cells respectively.
		This is conical by the previous two items. 
		See figure~\ref{fig:stratblowup}.
	\end{itemize}
	Notice that every stratum closure of the decomposition for $\Sigma(p,q,r)$ is contractible: ten $3$-dimensional cylinders, meridian disks, $2$-disks on the $T^{2}$ tori, $1$-dimensional arcs either on such a torus, or those subdividing the fiber circles (at least two in each). 
	We verify contractibility of a link for $X<Y\in\mathcal{C}$ (for $Y=\rint M$, $\overline{l(X,Y)}=\overline{l(X)}$, this was already checked above):
	\begin{itemize}
		\item[$d(X)=2$:] Then $l(X)$ is an arc.
		The only case to consider here is a $3$-cell $Y$, so $l(X,Y)=\lb*\rb$;
		\item[$d(X)=1$:] then $l(X)\cong D^{2}$.
		For an incident $2$-cell $Y$ $l(X,Y)=\lb*\rb$.
		For an incident $3$-cell $Y$ (cylinder) $l(X,Y)$ is an arc;
		\item[$d(X)=0$:] then $l(X)\cong D^{3}$, and $\partial l(X)$ is equipped with the stratification by faces of $\partial \Delta^{3}$ by simplicity, and by the cylinder decomposition.
		Then $l(X,Y)$ is a singleton, an arc or a triangle, depending on $Y$.
	\end{itemize}	
	\end{proof}

	\begin{ex}
		For $\Sigma(2,5,7)$ the Seifert invariants are $(2,1)$, $(5,1)$ and $(7,2)$ at vertices of order $2,5,7$, respectively.
		The corresponding stratifications of tori over boundaries of $H^{+}$ and $D_{3}$ are given in figure~\ref{fig:tor7}.
		Notice that $\pi_{1}(Q_{2})$ is infinite~\cite{mi-75}, since $1/2+1/5+1/7<1$.	
	\end{ex}
		
	\begin{proof}[Proof of theorem~\ref{thm:mainsm}]
	The boundary $\partial W^{4}_{m}$ has a collar with a unique up to a diffeomorphism compatible smooth structure by \cite[Thm.~1]{do-61}.
	By lemma~\ref{lm:resdecsig}, this collar admits a smooth conical stratification.
	Define the conical stratification of $W^{4}_{m}$ on the boundary by the above stratification of the collar, together with the maximal stratum $\rint W^{4}_{m}$.
	Define $Q:=B_{\mathcal{C}} W^{4}_{m}$, which is homotopy equivalent (compatible with the boundary) to $W_{m}^{4}$ by theorem~\ref{thm:intcorn}.
	Sending the facets $F_{X}$ of $B_{\mathcal{C}} W^{4}_{m}$ (see lemma~\ref{lm:fullblow}), $X\in\mathcal{C}$, to $\dim X$ is a proper coloring and defines a characteristic function $\lambda$ on it (compare with~\S\ref{sec:top}).
	Notice that the face poset for $Q$ (the order complex of $\mathcal{C}$) is pure (because closure of any stratum of dimension $>0$ has a stratum of codimension 1), so that maximal flags have equal lengths in $\mathcal{C}$.
	The realization theorem \cite[Thm.~1.1~(3)]{ku-ka-25} produces a smooth torus manifold $N^{8}$ from the triple $(Q,\lambda,0)$.
	Equivariant formality of $N^{8}$ follows from contractibility of the respective closed faces by~\cite[Thm.~2]{ma-pa-06}.
	We claim that $\partial Q$ is a regular CW-complex with the standard cells. 
	By theorem~\ref{thm:intcorn}, every such proper face $F_{X}$ is contractible, and is a transversal intersection of smooth manifolds with corners.
	Hence, $F_{X}$ is a smooth manifold with corners with $\dim F_{X}\leq 3$.
	By lemma~\ref{lm:poinc}, $F_{X}$ is homeomorphic to a closed ball.
	Then the attaching maps of cells are homeomorphisms, and $Q_{3}$ is a regular CW complex, implying $\pi_{1}(Q_{2})=\pi_{1}(Q_{3})$.
	Since $Q\simeq W^{4}_{m}$ is contractible with $\partial Q\simeq \Sigma(p,q,r)$, the proof of $\pi_{1}(N^{8})=1$ carries over verbatim from theorem~\ref{thm:top}.
	\end{proof}

	\section{Complexity $2$ and $3$}\label{sec:thm3proof}

	We briefly recall the necessary definitions for theorem~\ref{thm:mainext}.
	A smooth, closed, connected and simply connected manifold $M=M^{2n}$ equipped with a smooth effective action of a compact torus $T=T^{k}$ is called a \emph{GKM manifold} if $M$ is equivariantly formal, the set $M^{T}$ of $T$-fixed points for $M$ is finite and nonempty, and for every $x\in M^{T}$ the weights of the isotropy $T$-representation at $T_{x} M$ are pairwise linearly independent elements in the weight lattice $\mathfrak{t}^{*}\cong \Z^{k}$ (defined only up to signs).
	Any GKM manifold admits an (unsigned) \emph{GKM graph}, that is, a collection $\Gamma=(V,E,\alpha,\nabla)$, where $(V,E)$ is the $n$-valent graph being the equivariant $1$-skeleton $Q_{1}$ of the action on $M$, i.e. $V=M^{T}$ are its vertices and $E$ is the set of $T$-invariant $2$-spheres in $M$.
	An edge $e\in E$ is pointed, i.e. has the initial and the terminal vertices $i(e)$, $t(e)$, respectively, and is therefore oriented.
	Denote by $\overline{e}\in E$ the edge with the opposite orientation to $e\in E$.
	The collection $\nabla=\lb \nabla_{e}\rb_{e\in E}$ of bijections $\nabla_{e}\colon \str i(e)\to \str t(e)$ satisfying $\nabla_{\overline{e}}:=\nabla_{e}^{-1}$, where $\str v:=\lb e'\in E\colon i(e')=v\rb$, is called a \emph{connection} on $(V,E)$.
	By definition, the function $\alpha\colon E\to \Z^{k}/\lb\pm 1\rb$ satisfies the \textit{congruence relation}
	\[
		\alpha(\nabla_{e}e')=
		\eps_{e,e'}\alpha(e')+c_{e,e'}\alpha(e),\ e,e'\in \str	v
	\]
	for all $v\in V$, some $\eps_{e,e'}\in\lb \pm 1\rb$, $c_{e,e'}\in \Z$, and some choices of the lifts for the three values of $\alpha$ entering the congruence relation along the projection $\Z^{k}\to\Z^{k}/\lb \pm 1\rb$.
	The function $\alpha$ is called an \emph{axial function} on $(V,E,\nabla)$.
	Additionally, it is required that $\alpha$ is \textit{effective}, i.e. the restriction of $\alpha$ to $\str v$ linearly spans the codomain $\Z^{k}$ for every $v\in V$.
	A connected $j$-valent subgraph $F=(V_{F},E_{F})$ of $\Gamma$ is called a \textit{$j$-face} of $\Gamma$ if $\nabla_{e}e'\in E_{F}$ holds for all $e,e'\in E_{F}$ with a common origin.
	The number $c=n-k$ is called the \emph{complexity} of the GKM manifold $M^{2n}$ with the $T^{k}$-action of type $(n,k)$; we say that the respective GKM graph $\Gamma=\Gamma(M)$ has type $(n,k)$.
	The GKM manifold and its GKM graph are called GKM$_{j}$ if the values of $\alpha$ on any pairwise distinct $i\leq j$ edges of $\str v$ are linearly independent.
	One can consider GKM graphs satisfying the above conditions without assuming existence of an underlying torus action on a manifold.
	A GKM graph $\Gamma'$ is an \textit{extension} of a GKM graph $\Gamma$ if the respective graphs and connections coincide, and the respective axial functions satisfy $\alpha=p\circ\alpha'$ for some surjective homomorphism $p\colon \Z^{k'}\to \Z^{k}$.
	Given GKM graphs $\widetilde{\Gamma}$, $\Gamma$, a cover of the graphs $f\colon (\widetilde{V},\widetilde{E})\to (V,E)$ (in the sense of a covering for regular CW complexes) is called a \emph{GKM cover} if the equalities
	\[
		\widetilde{\alpha}(e)=\alpha(f(e)),\ 
		\nabla_{f(e)} f(e')=f(\nabla_{e} e')
	\]
	hold for all edges $e,e'\in \widetilde{E}$ with a common origin.
	A complete obstruction to an extension of a given GKM graph $\Gamma$ is given by the free abelian group $A(\Gamma)$ called the \emph{axial function group}~\cite{ku-19}.
	For the definition of $A(\Gamma)$ see~\cite{go-so-25}.
	Namely, an $(n,k)$-type GKM graph $\Gamma$ has an extension to an $(n,k')$-type GKM graph iff $\rk A(\Gamma)\geq k'$.
	
	\begin{proof}[Proof of theorem~\ref{thm:mainext}]
		We follow the proof of~\cite[Thm.~4]{go-so-25} with several modifications.
		The universal cover $\widetilde{Q_{2}}\to Q_{2}$ induces the GKM cover $\widetilde{\Gamma}\to\Gamma$ with the deck transformation group $H_{1}:=\pi_{1}(Q_{2})$~\cite[Lm.~5.3]{go-so-25}.
		The GKM graph $\widetilde{\Gamma}$ extends to a torus graph by~\cite[Thm.~3]{go-so-25}.
		By~\cite[Lm.~5.4]{go-so-25}, one has $A(\Gamma)=A(\widetilde{\Gamma})^{H_{1}}$ for the $H_{1}$-action on $A(\widetilde{\Gamma})=\Z^{n}$ by invertible over $\Z$ matrices:
		\[
			h\circ a(e):= a(h^{-1}(e)),\ h\in H_{1},\ a\in A(\widetilde{\Gamma}),\ e\in \widetilde{E}.
		\] 
		This representation has an invariant (i.e. trivial representation) split sublattice $\Z^{k}$ which follows from effectivity of the $T$-action, see proof of~\cite[Thm.~4.16]{go-so-25}.
		Therefore, the representation has the block form (where $c=n-k$)
		\[
			H_{1}\ni h\mapsto \begin{pmatrix}
			A(h) & 0\\
			B(h) & \Id
			\end{pmatrix},\ 
			A(h)=(a_{i,j}(h))_{1\leq i,j\leq c},\ 
			B(h)=(a_{i,j}(h))_{1\leq j\leq c<i}.
		\]
		Then, we have a homomorphism $f\colon H_{1}\to GL_{c}(\Z)$, $h\mapsto A(h)$.
		Notice that $\Img f$ is in fact in $\SL_{c}(\Z)$ because $\det\colon H_{1}\to\lb \pm 1\rb$ is trivial (as an abelian representation of a perfect group).
		Therefore, $H_{2}:=H_{1}/\ker f$ is a perfect subgroup of $\SL_{c}(\Z)$.
		Notice that 
		\begin{equation}\label{eq:repid}
			B(gh)=B(h)+B(g)A(h),\ g,h\in H_{1}.
		\end{equation}
		For the first two claims of the theorem it is enough to prove that $H_{2}$ is trivial; the remaining claims then follow by~\cite[Thm.~2]{go-so-26}. 
		Indeed, then $A=\Id$ is a trivial representation and therefore by~\eqref{eq:repid} one has $B(gh)=B(g)+B(h)$ for all $g,h\in H_{1}$.
		Hence, $B=0$ as an abelian representation of the perfect group $H_{1}$.
		Then the $H_{1}$-representation on $A(\widetilde{\Gamma})$ is trivial and $A(\Gamma)=\Z^{n}$ holds.
		Therefore, $\Gamma$ extends to a torus graph by Kuroki's criterion.
		
		Let $c=2$ and consider the image $H_{3}:=p(H_{2})$ for the natural projection $p\colon \SL_{2}(\Z)\to \PSL_{2}(\Z)$.
		Since $\PSL_{2}(\Z)\cong \Z/2\Z*\Z/3\Z$ is a free product, by Kurosh's theorem~\cite[Thm.~5.2]{ma-77} $H_{3}$ is isomorphic to the free product of a free group $F$ of rank $f$ with $a$ copies of $\Z/2\Z$ and $b$ copies of $\Z/3\Z$.
		Since $H_{3}$ is perfect, the cardinals sastisfy $f=a=b=0$, and we have $H_{3}=1$.
		We conclude $H_{2}=1$ from $\ker p=\Z/2\Z$ and $H_{2}$ being perfect.
		
		Let $c=3$ and assume that $H_{1}$ is finite.
		Notice that $H_{2}$ is a finite subgroup of $\SL_{3}(\Z)$, and therefore of $\GL_{3}(\Q)$.
		Its order $d$ satisfies $d\leq 48$ by Minkowski's theorem (e.g. see~\cite[Thm.~1]{se-07}) and therefore is smaller than the least order $60$ of a nontrivial perfect group (namely, of $A_{5}$).
		Hence, $H_{2}=1$.
		This finishes the proof.
	\end{proof}
			
	\noindent {\bf Acknowledgements:} The author gratefully acknowledges funding of the Deutsche Forschungsgemeinschaft
	(DFG, German Research Foundation): Project number 561158824 (Walter Benjamin Fellowship). 
	The author is grateful to O.~Goertsches and M.~Wiemeler for fruitful discussions on the subject of the note.

\end{document}